\title{Marked length rigidity for Fuchsian buildings}
\author{David Constantine}
\address{
Wesleyan University \\
Mathematics and Computer Science Department \\
Middletown, CT 06459}
\author{Jean-Fran\c{c}ois Lafont}
\address{Department of Mathematics\\
                 Ohio State University\\
                 Columbus, Ohio 43210}
\date{\today}
\newtheorem{thm}{Theorem}
\newtheorem*{theorem}{Main Theorem}
\newtheorem{lem}[thm]{Lemma}
\newtheorem{cor}[thm]{Corollary}
\newtheorem{prop}[thm]{Proposition}
\theoremstyle{definition}
\newtheorem*{rem}{Remark}
\newtheorem{exmp}[thm]{Example}
\newtheorem{defn}[thm]{Definition}
\numberwithin{equation}{section}
\numberwithin{thm}{section}
\def\Pb{\ifmmode{\Bbb P}\else{$\Bbb P$}\fi}
\def\Z{\ifmmode{\Bbb Z}\else{$\Bbb Z$}\fi}
\def\Q{\ifmmode{\Bbb Q}\else{$\Bbb Q$}\fi}
\def\C{\ifmmode{\Bbb C}\else{$\Bbb C$}\fi}
\def\R{\ifmmode{\Bbb R}\else{$\Bbb R$}\fi}
\def\H{\ifmmode{\Bbb H}\else{$\Bbb H$}\fi}
\def\Isom{\operatorname{Isom}}
\def\aut{\operatorname{Aut}}
\newcommand{\transv}{\mathrel{\text{\tpitchfork}}}
\newcommand{\tpitchfork}{%
  \vbox{
    \baselineskip\z@skip
    \lineskip-.52ex
    \lineskiplimit\maxdimen
    \m@th
    \ialign{##\crcr\hidewidth\smash{$-$}\hidewidth\crcr$\pitchfork$\crcr}
  }%
}
\newcommand{\ntransv}{\mathrel{\text{\ntpitchfork}}}
\newcommand{\ntpitchfork}{%
\raise 1ex
  \vbox{
    \baselineskip\z@skip
    \lineskip-1.2ex
    \lineskiplimit\maxdimen
    \m@th
    \ialign{##\crcr\hidewidth\smash{$\diagup$}\hidewidth\crcr$\pitchfork$\crcr\smash{$-$}\crcr}
  }%
}
\begin{document}

\begin{abstract}

We consider finite $2$-complexes $X$ that arise as quotients of Fuchsian buildings by subgroups of the combinatorial
automorphism group, which we assume act freely and cocompactly. We show that locally CAT(-1) metrics on $X$ which are piecewise
hyperbolic, and satisfy a natural non-singularity condition at vertices are marked length spectrum rigid within certain classes
of negatively curved, piecewise Riemannian metrics on $X$. As a key step in our proof, we show that the marked length spectrum function for such metrics determines the volume of $X$.
\end{abstract}

\maketitle

\setcounter{secnumdepth}{1}

\setcounter{section}{0}

\section{\bf Introduction}

One of the central results in hyperbolic geometry is Mostow's rigidity theorem, which states that for closed hyperbolic manifolds of dimension $\geq 3$, isomorphism of fundamental groups implies isometry. Moving away from the constant curvature case, one must impose some additional constraints on the isomorphism of fundamental groups if one hopes to conclude it is realized by an isometry. On any closed negatively curved manifold $M$, each free homotopy class of loops contains a unique geodesic representative. This gives a well-defined class function $MLS: \pi_1(M)\rightarrow \mathbb R^+$, called the {\it marked length spectrum function}. Given a pair of negatively curved manifolds $M_0, M_1$, we say they have {\it the same} marked length spectrum if there is an isomorphism $\phi: \pi_1(M_0) \rightarrow \pi_1(M_1)$ with the property that $MLS_1\circ \phi = MLS_0$. The marked length spectrum conjecture predicts that closed negatively curved manifolds with the same marked length spectrum must be isometric (and that the  isomorphism of fundamental groups is induced by an isometry). In full generality, the conjecture is only known to hold for closed surfaces, which was independently established by Croke \cite{croke} and Otal \cite{otal}. In the special case where one of the Riemannian metrics is locally symmetric, the conjecture was established by Hamenst\"adt \cite{ham} (see also Dal'bo and Kim \cite{dalbo-kim} for analogous results in the higher rank case).  

Of course, it is possible to formulate the marked length spectrum conjecture for other classes of geodesic spaces -- for example, compact locally CAT(-1) spaces. Still in the realm of surfaces, Hersonsky and Paulin \cite{hersonsky-paulin} extended the result  to some singular metrics on surfaces, while Bankovi\'c and Leininger \cite{bankovic-leininger} and Constantine \cite{constantine} give extensions to the case of non-positively curved metrics. Moving away from the surface case, the conjecture was verified independently by Alperin and Bass \cite{alperin-bass} and by Culler and Morgan \cite{culler-morgan} in the special case of locally CAT(-1) spaces whose universal covers are metric trees. This was recently extended by the authors to the context of compact geodesic spaces of topological (Lebesgue) dimension one, see \cite{CL}. 

In this paper, we are interested in the marked length spectrum conjecture for compact quotients of {\it Fuchsian buildings}, a class of polygonal $2$-complexes supporting locally CAT(-1) metrics. Fixing such a quotient $X$, we can then look at various families of locally negatively curved metrics on $X$. The 
metrics we consider are {\it piecewise Riemannian}: each polygon in the complex is equipped with a Riemannian metric with geodesic boundary edges. They are also assumed to be locally negatively curved, which means that the metrics satisfy Gromov's ``large link condition'' at all the vertices. We consider three classes of such metrics: those whose curvatures are everywhere bounded above by -1, those whose curvature is everywhere hyperbolic, and those whose curvatures are everywhere within the interval $[-1, 0)$. The space of such metrics will be denoted $\mathcal M_{\leq} (X)$, $\mathcal M_{\equiv}(X)$, and $\mathcal M_{\geq}(X)$ respectively. Note that the family of piecewise hyperbolic metrics $\mathcal M_{\equiv}(X)$ are precisely the metrics lying in the intersection $\mathcal M_{\leq}(X) \cap \mathcal M_{\geq}(X)$. Furthermore, all three of these classes of metrics lie within the space $\mathcal M_{neg}(X)$, consisting of all (locally) negatively curved, piecewise Riemannian metrics on $X$. Finally, if we impose some further regularity conditions on the vertices, we obtain subclasses of metrics $\mathcal M_{\leq}^v(X)$, $\mathcal M_{\equiv}^v(X)$, $\mathcal M_{\geq}^v(X)$, and $\mathcal M_{neg}^v(X)$. We refer our reader to Section \ref{background} for 
further background on Fuchsian buildings, including precise definitions for these classes of metrics -- let us just mention that, amongst these, the most ``regular'' metrics are those lying in the class $\mathcal M_{\equiv}^v(X)$, which forms an analogue of Teichm\"uller space for $X$.

\vskip 10pt

\begin{theorem}
Let $X$ be a quotient of a Fuchsian building $\tilde X$ by a subgroup $\Gamma \leq \aut (\tilde X)$ of the combinatorial automorphism group $\aut (\tilde X)$ which acts freely and cocompactly. Consider a pair of negatively curved metrics $g_0, g_1$ on $X$, where $g_0$ is in $\mathcal M_{\equiv}^v(X)$, and $g_1$ is in $\mathcal M_{\geq}^v(X)$.  Then $(X, g_0)$ and $(X, g_1)$ have the same marked length spectrum if and only if they are isometric.
\end{theorem}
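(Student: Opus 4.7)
The plan is to combine the volume-recovery from MLS (announced as the key step in the abstract) with a natural-map rigidity argument in the spirit of Besson-Courtois-Gallot, adapted to the singular piecewise Riemannian setting. The asymmetric hypotheses --- $g_0$ piecewise hyperbolic and $g_1$ only curvature-pinched in $[-1,0)$ --- look precisely calibrated for such an argument: the upper curvature bound on $g_1$ combined with the constant curvature $-1$ on the regular locus of $g_0$ should force any reasonable comparison map $F : (\tilde X, g_1) \to (\tilde X, g_0)$ to satisfy a pointwise Jacobian inequality $|\J(F)| \leq 1$, which together with a volume equality would yield rigidity.

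First I would observe that, since both $g_i$ are locally negatively curved on the compact complex $X$, one obtains a uniform strict negative upper bound on the sectional curvatures, and combined with the large-link condition this makes $\tilde X$ Gromov hyperbolic in either metric; every nontrivial $\gamma \in \Gamma$ then has a unique axis and a well-defined translation length. Standard boundary-extension arguments should upgrade the MLS equality to a $\Gamma$-equivariant homeomorphism $\tilde\phi : \partial \tilde X_0 \to \partial \tilde X_1$ of Gromov boundaries preserving the Bonahon-type cross-ratio, which is recoverable from translation lengths of commutators. Attaching to each metric a Patterson-Sullivan geodesic current on the double boundary $\partial^{(2)} \tilde X_i$ and proving a Bonahon-style ``intersection equals volume'' identity for such currents, one would then transport the $g_0$-current via $\tilde\phi$, use cross-ratio invariance to match intersection numbers, and conclude $\operatorname{Vol}(X, g_0) = \operatorname{Vol}(X, g_1)$; this is the volume-recovery step flagged in the abstract.

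For the rigidity step I would define a $\Gamma$-equivariant natural map $F : (\tilde X, g_1) \to (\tilde X, g_0)$ by a barycenter construction on $\tilde X_0$, using the Patterson-Sullivan measure on $\partial \tilde X_0$ transported via $\tilde\phi$ to prescribe a visual measure at each $p \in \tilde X_1$. On the regular locus of $\tilde X_1$, a BCG-type Jacobian calculation, using $K_{g_0} \equiv -1$ and $K_{g_1} \geq -1$, should yield $|\J(F)| \leq 1$ pointwise with equality only where $F$ is a local isometry. Integrating this and invoking the volume equality from the previous paragraph would force $|\J(F)| \equiv 1$ almost everywhere, so $F$ is a local isometry on the regular locus; the vertex regularity condition built into $\M^v$, together with the rigid combinatorics of the building, should then extend $F$ across the $1$-skeleton to a global isometry.

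The main obstacle will be that every piece of the classical smooth apparatus above --- the Bonahon intersection-equals-volume identity, the Patterson-Sullivan construction, the BCG Jacobian estimate --- must be re-established in the singular polygonal setting, where standard Riemannian techniques break down along edges and vertices. In particular one must show that the barycenter map respects the polygonal structure, so that the singular skeletons are matched, and control the degeneration of the Jacobian as one approaches the $1$-skeleton. The vertex regularity condition built into the class $\M^v$ is surely inserted precisely to exclude the vertex pathologies that would obstruct these extensions.
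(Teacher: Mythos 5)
Your high-level outline (recover the volume from the marked length spectrum, then run a rigidity argument exploiting the asymmetric curvature hypotheses) matches the paper's skeleton, but both of your two main steps have gaps serious enough that the proof would not go through as proposed. For the volume-recovery step, the central obstruction in a building is that transverse intersection of geodesics is \emph{not} detected by linking of endpoints at infinity: because of branching along walls, two geodesics can cross in $(\tilde X,\tilde g_0)$ while the corresponding $g_1$-geodesics (obtained via the boundary identification) miss each other entirely. Hence the classical set of intersecting pairs $DG(X)$, and with it the Bonahon intersection pairing, is not invariant under the change of metric, and ``proving a Bonahon-style intersection equals volume identity'' is not a transplant of the surface argument but the bulk of the work: the paper must replace $DG(X)$ by a weighted countable union of ``good windows,'' prove that this adjusted set and the adjusted pairing $\hat i$ are preserved by the geodesic correspondence (this requires the totally geodesic strip/plane constructions of Section 5), re-derive the length formula $\hat i(L_g,\beta)=2\,\mathrm{Length}_g(\beta)$ and the volume formula $\hat i(L_g,L_g)=\tfrac{1}{4\pi}\operatorname{Vol}_g(X)$, and establish finiteness and a restricted continuity statement, since full continuity of $\hat i$ is unclear in this setting. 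In addition, the ``intersection equals volume'' identity holds for the \emph{Liouville} current (constructed here following Ballmann--Brin), not for Patterson--Sullivan currents: in variable curvature the self-intersection of the Patterson--Sullivan current does not compute $\operatorname{Vol}$, so your choice of current would not deliver the volume equality even granting the pairing.

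For the rigidity step, the Besson--Courtois--Gallot barycenter scheme is a poor fit: the sharp Jacobian estimate underlying BCG is known to fail in dimension two (their method needs $n\geq 3$), which is exactly why Otal, Croke, Croke--Dairbekov, and this paper avoid it, and no barycenter/Jacobian calculus is available across the singular $1$-skeleton of the complex. The paper's route is softer and uses the hypotheses differently: MLS equality gives $\operatorname{Vol}(X,g_0)=\operatorname{Vol}(X,g_1)$ by applying the volume theorem in both directions; then a Gauss--Bonnet computation on chambers, combined with the vertex-regularity constraint (every $2m$-cycle in each link has length exactly $2\pi$, which by Lemma \ref{combinatorial-angles} forces all metric angles to equal the combinatorial angles and makes link volumes metric-independent), shows that a metric in $\mathcal M^v_{\geq}(X)$ achieving this volume must in fact lie in $\mathcal M^v_{\equiv}(X)$; finally, rigidity for $g_0\in\mathcal M^v_\equiv$ versus $g_1\in\mathcal M_{\leq}$ follows from Otal's theorem that the MLS-preserving boundary map is M\"obius together with Bourdon's theorem filling a M\"obius map from $\partial^\infty\mathbb H^2$ into a CAT($-1$) space by an isometric embedding, applied apartment by apartment and patched using half-apartments and Hersonsky--Paulin. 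Note that Bourdon's theorem requires curvature $\leq -1$ on the target, which is precisely why the reduction to piecewise hyperbolic metrics must precede it; the asymmetry of hypotheses is consumed there, not in a pointwise Jacobian comparison, and your proposal contains no workable substitute for either the Gauss--Bonnet reduction or the Bourdon filling argument.
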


In the process of establishing the {\bf Main Theorem}, we also obtain a number of auxiliary results which may be of some independent interest. Let us briefly mention a few of these. Throughout the rest of this section, $X$ will denote a quotient of a Fuchsian building $\tilde X$ by a subgroup $\Gamma \leq \aut (\tilde X)$ which acts freely and cocompactly.

The first step is to obtain marked length spectrum rigidity for certain pairs of metrics in $\mathcal M_{\leq}(X)$.

\begin{thm}[MLS rigidity -- special case]\label{thm:MLS-rigidity-special-case}
Let $g_0 , g_1$ be any two metrics in $\mathcal M_\equiv ^v$ and $\mathcal M_{\leq}(X)$ respectively. Then $(X, g_0)$ and $(X, g_1)$ have the same marked length spectrum if and only if they are isometric.
\end{thm}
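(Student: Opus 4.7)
The plan is to combine a boundary cross-ratio argument with a volume-based Gauss--Bonnet comparison, in three stages. For the first step, since $g_0 \in \mathcal M_\equiv^v(X)$ is piecewise hyperbolic and $g_1 \in \mathcal M_{\leq}(X)$ has sectional curvature at most $-1$, both lifts $(\tilde X, \tilde g_0)$ and $(\tilde X, \tilde g_1)$ are geodesically complete CAT$(-1)$ spaces on which $\Gamma$ acts freely, cocompactly, and by isometries. Equality of the marked length spectra then promotes the isomorphism of $\pi_1(X)$ to a $\Gamma$-equivariant, cross-ratio preserving homeomorphism $\partial\phi\colon \partial_\infty(\tilde X, \tilde g_0) \to \partial_\infty(\tilde X, \tilde g_1)$ of the visual boundaries, via the standard argument for cocompact CAT$(-1)$ spaces going back to Bourdon and Hersonsky--Paulin.

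The second step, which I expect to be the main obstacle, is to promote $g_1$ into the class $\mathcal M_\equiv^v(X)$. The auxiliary result from the abstract yields $\area(X, g_0) = \area(X, g_1)$. A combinatorial Gauss--Bonnet identity for piecewise Riemannian $2$-complexes expresses $2\pi\chi(X)$ as the integral of Gaussian curvature over the smooth part plus vertex angle-defect terms controlled by link geometry (which, in a Fuchsian building, is that of a bipartite graph rather than a circle). Since any $g \in \mathcal M_{\leq}(X)$ satisfies $K \leq -1$ pointwise and has CAT$(-1)$ vertex links, this gives an upper bound on $\area(X, g)$ that is saturated by $g_0$: its curvature is $\equiv -1$ on the smooth part, and the vertex non-singularity condition of $\mathcal M_\equiv^v$ makes the defect terms minimal. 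Area equality then forces $K_{g_1} \equiv -1$ almost everywhere on the smooth part and matches the vertex profile, so $g_1 \in \mathcal M_\equiv^v(X)$. Establishing the sharp Gauss--Bonnet inequality and identifying the saturating metrics in this $2$-complex setting, together with invoking the (separately proven) fact that MLS determines volume, is the delicate point.

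For the third step, with both metrics now piecewise hyperbolic and vertex-regular, each combinatorial apartment $A \subset \tilde X$ is isometric, for either $\tilde g_0$ or $\tilde g_1$, to the standard tiling of $\mathbb H^2$. Its visual boundary $\partial_\infty A$ is a topological circle inside $\partial_\infty(\tilde X, \tilde g_i)$, distinguished as the closure of the set of endpoints of wall geodesics contained in $A$ (equivalently, as the limit set in $\tilde X$ of the apartment-preserving action of a suitable Fuchsian subgroup of $\Gamma$). Since $\partial\phi$ carries wall endpoints to wall endpoints --- walls being determined combinatorially, and their endpoints recorded in the MLS via translation lengths of wall-stabilizing elements --- it sends the $\tilde g_0$-boundary of $A$ onto the $\tilde g_1$-boundary of $A$. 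The restriction is a cross-ratio preserving self-homeomorphism of $S^1 = \partial_\infty \mathbb H^2$, hence a M\"obius transformation, and extends uniquely to an isometry $A \to A$. Compatibility of these apartment isometries on intersections of apartments follows from $\Gamma$-equivariance of $\partial\phi$ and uniqueness of geodesics in CAT$(-1)$ spaces, assembling into a $\Gamma$-equivariant isometry $(\tilde X, \tilde g_0) \to (\tilde X, \tilde g_1)$ that descends to the desired isometry of $(X, g_0)$ with $(X, g_1)$.
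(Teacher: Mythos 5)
The central gap is in your second step. You obtain $\mathrm{Vol}(X,g_0)=\mathrm{Vol}(X,g_1)$ by invoking the ``MLS determines volume'' result, but that result (Theorem \ref{thm:MLS-determines-volume}) is proved only for pairs of metrics in $\mathcal{M}_{neg}^{v}(X)$, i.e.\ under the non-singular vertex condition, whereas in Theorem \ref{thm:MLS-rigidity-special-case} the metric $g_1$ is assumed only to lie in $\mathcal{M}_{\leq}(X)$. The vertex hypothesis is not cosmetic: it is what guarantees that geodesics travelling along walls are negligible for the Liouville current, and without it one only gets $\hat i(\alpha_n,L_g)\geq 2\cdot\mathrm{Length}_g(\alpha_n)$, which breaks the chain of inequalities in the proof of Theorem \ref{thm:MLS-determines-volume}; extending that theorem to $\mathcal{M}_{\leq}(X)$ without the vertex condition is explicitly left open. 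So the volume equality you need to run your Gauss--Bonnet saturation argument is exactly what is unavailable. (The Gauss--Bonnet step itself is plausible: for $K_1\leq -1$ one gets $\mathrm{Vol}(g_1)\leq \mathrm{Vol}(g_0)$ by averaging the large-link condition over $2m$-cycles, with equality forcing $K_1\equiv -1$ and non-singular vertices --- but it is the reverse inequality $\mathrm{Vol}(g_1)\geq \mathrm{Vol}(g_0)$ that you cannot justify.) The actual proof avoids any volume input at this stage: since $g_1\in\mathcal{M}_{\leq}(X)$ makes $(\tilde X,\tilde g_1)$ CAT($-1$), Bourdon's theorem \cite{bourdon} extends the M\"obius boundary map on each apartment circle to an isometric embedding of $\mathbb{H}^2$ into $(\tilde X,\tilde g_1)$, whose image must be the apartment; this forces each $(A,\tilde g_1|_A)$ to be hyperbolic and hence $g_1\in\mathcal{M}_\equiv^v(X)$. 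This is precisely where the hypothesis ``curvature $\leq -1$'' (as opposed to $\geq -1$) enters, as the paper's remark after the proof explains.

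A secondary, but real, issue is your assembly step. Asserting that compatibility of the apartment isometries ``follows from $\Gamma$-equivariance of $\partial\phi$ and uniqueness of geodesics'' is not a proof: two apartments may intersect in as little as a single chamber, whose boundary at infinity is empty, so no determination-by-boundary argument applies directly to the overlap. The paper's proof first handles the case where the overlap is a half-apartment, using the explicit description of Bourdon's filling map as an intersection of image geodesics chosen inside the overlap, and then treats the general case by approximating one apartment by a chain of apartments, each consecutive pair sharing a half-space containing a given interior point \cite[Lemma 2.10]{hersonsky-paulin}, passing to the limit. Some argument of this kind is needed to glue your maps $A\to A$ into a global $\Gamma$-equivariant isometry.
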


This result is established in Section 3, and is based on an argument outlined to us by an anonymous referee. Next, we study the volume functional on the space of metrics. We note that the volume is constant on the subspace $\mathcal M_{\equiv}^v(X)$, and in Section 4, we show the following rigidity result:

\begin{thm}[Minimizing the volume]\label{thm:minimizing-the-volume}
Let $g_0$ be a metric in $\mathcal M_{\equiv}^v(X)$, and $g_1$ an arbitrary metric in $\mathcal M_{\geq}^v(X)$. If $Vol(X, g_1) \leq Vol(X, g_0)$, then $g_1$ must lie within $\mathcal M_{\equiv}^v(X)$ (and the inequality is actually an equality).
\end{thm}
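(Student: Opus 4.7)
The plan is to carry out a Gauss--Bonnet computation on each $2$-cell of $X$, and to exploit the pointwise curvature bound $K_1\geq -1$ (built into the definition of $\mathcal M_{\geq}^v$) together with the vertex regularity captured by the superscript $v$.

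Each polygonal face $P$ of $X$ is an $n_P$-gon with geodesic boundary edges, and applying Gauss--Bonnet to the topological disk $P$ gives
\begin{equation*}
\int_P K\, dA \;=\; (2-n_P)\pi + \sum_{i=1}^{n_P}\alpha_i(P),
\end{equation*}
where the $\alpha_i(P)$ are the interior angles of $P$ at its corners (the geodesic boundary contributes no geodesic-curvature term). Applied to $g_0$, where $K_0\equiv -1$, this yields $\area_0(P)=(n_P-2)\pi - \sum_i \alpha_i^0(P)$; applied to $g_1$, the bound $K_1\geq -1$ yields the inequality $\area_1(P)\geq (n_P-2)\pi - \sum_i \alpha_i^1(P)$.

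I would then sum these identities and inequalities over all $2$-cells of $X$. The topological term $\sum_P (n_P-2)\pi$ is a combinatorial invariant of $X$, while the remaining angle sum rearranges as $\sum_P\sum_i \alpha_i(P)=\sum_v \Theta_v$, where $\Theta_v$ denotes the total cone angle of the metric at the vertex $v$. The non-singularity hypothesis encoded by the $v$-superscript should pin down each individual $\Theta_v$ to the combinatorially prescribed value realized in the regular piecewise-hyperbolic model, and in particular $\sum_v \Theta_v^1 = \sum_v \Theta_v^0$. Combining these ingredients produces the global inequality $\operatorname{Vol}(X,g_1)\geq \operatorname{Vol}(X,g_0)$. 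The hypothesis $\operatorname{Vol}(X,g_1)\leq\operatorname{Vol}(X,g_0)$ then forces equality face-by-face: $\int_P K_1\,dA=-\area_1(P)$ on every $P$, and together with $K_1\geq -1$ this gives $K_1\equiv -1$ on each face. Hence $g_1\in\mathcal M_\equiv^v(X)$ and the two volumes agree.

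The main obstacle is precisely the cone-angle equality: one must extract from the definition of $\mathcal M^v$ the fact that the non-singularity condition prescribes each individual $\Theta_v$ (and not merely their sum) with a common value shared by all metrics in $\mathcal M_\equiv^v\cup\mathcal M_\geq^v$. A secondary, more technical point is to justify that no boundary or edge terms are lost when summing the face-wise Gauss--Bonnet identities: the geodesic boundary assumption rules out geodesic-curvature contributions, and the piecewise-Riemannian structure makes the curvature integrable on each face. Once these two points are handled, the argument is essentially bookkeeping built on top of Gauss--Bonnet and the pointwise curvature lower bound.
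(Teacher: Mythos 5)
Your scheme is the paper's own argument: Gauss--Bonnet chamber by chamber, metric-independence of the combinatorial term, identification of the total angle sum with $\sum_v Vol(lk(v),d_i)$ (your $\sum_v\Theta_v$), and then rigidity from the pointwise bound $K_1\geq -1$. The paper runs it as a contradiction (a point with $K_1>-1$ makes the two angle sums strictly unequal), while you run it directly as $Vol(X,g_1)\geq Vol(X,g_0)$ followed by an equality analysis; these are the same computation rearranged, and your equality step ($\int_P(K_1+1)\,dvol_1=0$ with $K_1+1\geq 0$ forces $K_1\equiv -1$ on each chamber, hence $g_1\in\mathcal M^v_\equiv$) is fine.

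The one genuine gap is the point you flag but do not resolve: that the non-singularity condition pins down each individual $\Theta_v$. This is the crux --- it is the only place the superscript $v$ and the building structure enter --- and it is not immediate, because membership in $\mathcal M^v_{neg}$ only says that every $2m$-cycle in the link $lk(v)$ has angular length exactly $2\pi$, whereas $\Theta_v$ is the \emph{total} length of the link graph, summed over all chambers at $v$, not just those in a single apartment. The paper supplies exactly this step as Lemma \ref{combinatorial-angles} and Corollary \ref{metric-angles-equal-comb-angles}: in a thick generalized $m$-gon in which every $2m$-cycle has length exactly $2\pi$, every edge has length $\pi/m$. Thickness is essential: an antipodal pair of vertices of $lk(v)$ is joined by at least three pairwise internally disjoint combinatorial paths of length $m$, any two of which concatenate to a $2m$-cycle of length $2\pi$, so each such path has length $\pi$; an exchange argument using degree $\geq 3$ then equalizes the lengths of all edges at a vertex, and connectivity propagates this over $lk(v)$, forcing the common value $\pi/m$. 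Hence for every metric in $\mathcal M^v_{neg}$ each corner angle equals the combinatorial angle, so $\Theta_v$ equals $\pi/m_v$ times the number of edges of $lk(v)$, a purely combinatorial quantity --- precisely the cone-angle equality you need, valid for both $g_0$ and $g_1$. Your secondary worry about edge terms is harmless, since edges are geodesic for every metric considered. With that lemma supplied, your proof is complete and coincides with the paper's.
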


Finally, the last (and hardest) step in the proof is a general result relating the marked length spectrum and the volume. We show:

\begin{thm}[MLS determines volume]\label{thm:MLS-determines-volume}
Let $g_0, g_1$ be an arbitrary pair of metrics in $\mathcal M_{neg}(X)$. If $MLS_0 \leq MLS_1$, then $Vol(X, g_0) \leq Vol(X, g_1)$.
\end{thm}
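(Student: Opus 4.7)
The plan is to adapt Bonahon's geodesic current machinery to the Fuchsian building setting. To each metric $g_i \in \mathcal M_{neg}^v(X)$ I would associate a \emph{Liouville current} $L_i$, a $\Gamma$-invariant positive Radon measure on the space of unoriented bi-infinite geodesics of the universal cover $\tilde X$ (generically parametrized by $\partial \tilde X \times \partial \tilde X$ modulo the flip). The two required properties are:
(i) for every closed geodesic $\gamma$ in $X$, the intersection number $i(L_i, [\gamma])$ equals $MLS_i(\gamma)$; and
(ii) the self-intersection satisfies $i(L_i, L_i) = c \cdot Vol(X, g_i)$ for a universal constant $c > 0$, arising from a Crofton-type identity analogous to Bonahon's formula relating $i(L, L)$ to area for hyperbolic surfaces.

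Granting (i) and (ii), the conclusion is a short argument using equidistribution of closed geodesics. Ergodicity of the geodesic flow on $(X, g_i)$ with respect to the Liouville/BMS measure -- available in sufficient generality for CAT(-1) spaces -- yields sequences $\gamma_n^{(i)}$ of closed geodesics whose normalized currents $[\gamma_n^{(i)}]/\ell_i(\gamma_n^{(i)})$ converge weakly to the appropriate projectivization of $L_i$. Pairing against $L_1$ for the sequence with $i = 1$, continuity of the intersection form and the hypothesis $MLS_0 \leq MLS_1$ give
$$\frac{i(L_0, L_1)}{i(L_1, L_1)} \;=\; \lim_{n \to \infty} \frac{\ell_0(\gamma_n^{(1)})}{\ell_1(\gamma_n^{(1)})} \;\leq\; 1,$$
so that $i(L_0, L_1) \leq c \cdot Vol(X, g_1)$. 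Running the analogous argument with $i = 0$ yields $i(L_0, L_1) \geq c \cdot Vol(X, g_0)$. Combining the two inequalities produces exactly the conclusion $Vol(X, g_0) \leq Vol(X, g_1)$.

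The main obstacle will be the rigorous construction of the Liouville current and the verification of properties (i) and (ii) within the singular geometry of a Fuchsian building. Unlike the manifold case, a geodesic in $\tilde X$ striking a vertex may branch into the surrounding apartments, so the geodesic flow is not everywhere defined; the Liouville measure must be built apartment-by-apartment from the hyperbolic structure of each chamber and shown to be flow-invariant off the ``bad'' set. This is where the vertex regularity encoded in the $\mathcal M_{neg}^v(X)$ class is essential: it should guarantee that the set of geodesics meeting a vertex is Liouville-null, and that the Crofton-style identity yielding (ii) survives the gluing between apartments.

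A secondary technical input is continuity of the intersection pairing in the weak-$\ast$ topology on currents, together with the equidistribution statement itself; these are standard in Bonahon's framework for surfaces, but require verification (or import of the corresponding CAT(-1) results) in the present building setting. Together with the already-established \textbf{Theorem \ref{thm:minimizing-the-volume}} and \textbf{Theorem \ref{thm:MLS-rigidity-special-case}}, the inequality furnished by the present theorem is then the missing piece allowing the \textbf{Main Theorem} to follow: an equality of marked length spectra gives $Vol(X, g_0) = Vol(X, g_1)$, volume minimization forces $g_1 \in \mathcal M^v_\equiv(X)$, and MLS rigidity in the piecewise hyperbolic class closes the loop.
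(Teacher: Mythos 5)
Your overall scheme (Liouville currents, an intersection pairing with $i(L,\gamma)\sim\mathrm{length}$ and $i(L,L)\sim\mathrm{Vol}$, approximation by closed-geodesic currents, continuity) is indeed the skeleton of the paper's argument, which follows Croke--Dairbekov and Bonahon. But there is a genuine gap at the step you treat as routine: the existence of a single, metric-independent intersection pairing satisfying (i) and (ii) for \emph{both} metrics simultaneously. On a surface, transverse intersection of two geodesics is detected by linking of their endpoint pairs in $\partial^\infty\tilde S$, so the set $DG$ of intersecting pairs is intrinsic to $\partial^\infty\times\partial^\infty$ and automatically invariant under the change of marking. In a Fuchsian building this fails: because of branching, two geodesics can intersect in $(\tilde X,\tilde g_0)$ while the $g_1$-geodesics with the same endpoints at infinity do not intersect at all. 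Hence the set of ``transversally intersecting pairs'' depends on the metric, the naive pairing is not invariant under the boundary identification $\varphi$, and your key identities cannot hold for $g_0$ and $g_1$ with one and the same $DG$; the chain $i(L_0,L_0)\leq i(L_0,L_1)\leq i(L_1,L_1)$ then has no meaning. This is exactly the difficulty the paper isolates, and resolving it occupies most of Sections 6--7: one restricts to intersections witnessed by \emph{good windows} (convex unions of chambers through which the two geodesics pass, exiting through four edges in alternating cyclic order), proves via a link lemma and a strip-filling construction that such configurations embed in totally geodesic planes, so that linking at infinity \emph{does} persist and the good-window intersection data is preserved by $\varphi$; and one reweights each window $U_i$ by $\varpi(U_i)=\prod(q(e_i)-1)$ so that the Crofton-type computations still give $\hat i(L_g,\beta)=2\,\mathrm{Length}_g(\beta)$ (for $\beta$ not along a wall) and $\hat i(L_g,L_g)=\frac{1}{4\pi}\mathrm{Vol}_g(X)$. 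Without some substitute for this ``adjusted'' pairing, your properties (i) and (ii) are not simultaneously available, and the proof does not go through.

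Two secondary points. First, you assume continuity of the pairing in both entries together with an equidistribution statement coming from ergodicity of the geodesic flow; the paper cannot establish full continuity of $\hat i$ and instead proves only a weak form (convergence of $\hat i(\alpha_n,L_{g_1})$ when $\alpha_n$ are closed-geodesic currents converging to $L_{g_0}$), which, combined with Bonahon's density of multiples of closed-geodesic currents in the space of currents, suffices; your stronger assumptions would themselves require proof in this singular setting. Second, the identity $\hat i(L_g,\beta)=2\,\mathrm{Length}_g(\beta)$ fails for geodesics running along walls (one gets a different constant depending on the thickness), so one must also know that walls carry zero Liouville measure and choose the approximating closed geodesics off the walls; this is where the non-singular vertex hypothesis is used, beyond the point you mention about geodesics through vertices.
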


The analogous result for negatively curved metrics on a closed surface is due to Croke and Dairbekov \cite{croke-dairbekov}, who also established a version for conformal metrics on negatively curved manifolds (see also some related work by Fana\"i \cite{Fa} and by Z. Sun \cite{sun}). Our proof of Theorem \ref{thm:MLS-determines-volume} roughly follows the approach in \cite{croke-dairbekov}.  After setting up the preliminaries in Section \ref{sec:flows and currents}, we introduce in Sections \ref{sec:transverse} and \ref{sec:pairings} a new notion of intersection pairing, a central tool in Otal's and Croke and Dairbekov's work on the marked length spectrum. 
Our pairing relies only on the combinatorics of the building, and thus is metric independent. However, we show in Section \ref{sec:compute}
that this combinatorial intersection pairing, when applied to geometrically defined currents, still captures some of the geometry of the underlying metric. In Sections \ref{sec:geom lemma} and \ref{sec:cty} we show a weak form of continuity for the combinatorial intersection pairing, 
evaluated along certain specific sequences of currents.  These properties of the combinatorial intersection pairing are
then used to prove Theorem \ref{thm:MLS-determines-volume} in Section \ref{sec:MLS-determines-volume}.

Finally, using these three theorems, the proof of the {\bf Main Theorem} is now straightforward.

\begin{proof}[Proof of Main Theorem]
Let $g_0$ be a metric in $\mathcal M_{\equiv}^v(X)$, and $g_1$ a metric in $\mathcal M^v_{\geq}(X)$. If $MLS_0 \equiv MLS_1$, then by Theorem \ref{thm:MLS-determines-volume}, we see that $Vol(g_1)=Vol(g_0)$. So Theorem \ref{thm:minimizing-the-volume} forces $g_1$ to lie in the space $\mathcal M_{\equiv}^v(X)$. 
Since they have the same marked length spectrum, Theorem \ref{thm:MLS-rigidity-special-case} now allows us to conclude that $(X, g_0)$ is isometric to $(X, g_1)$, completing the proof.
\end{proof}

These results provide partial evidence towards the general marked length spectrum conjecture for these compact quotients of Fuchsian buildings, which we expect to hold for any pairs of metrics in $\mathcal M_{neg}(X)$. We should mention that rigidity theorems for such quotients $X$ are often difficult to prove. For instance combinatorial (Mostow) rigidity was established by Xiangdong Xie \cite{X} (building on previous work of Bourdon \cite{Bou2}). Quasi-isometric rigidity was also established by Xie \cite{X}, generalizing earlier work of Bourdon and Pajot \cite{BP}. Superrigidity with targets in the isometry group of $\tilde X$ was established by Daskalopoulos, Mese, and Vdovina \cite{DMV}. Finally, in the context of volume entropy, recent work of Ledrappier and Lim \cite{LL} leaves us uncertain as to which metrics in $\mathcal M_{\equiv}(X)$ minimize the volume growth entropy (they show that the ``obvious'' candidate for a minimizer is actually not a minimizer).

\subsection{Acknowledgements} The first named author would like to thank Ohio State for hosting him for several visits during which a portion of this work was completed. The second author was partially supported by the NSF, under grants DMS-1207782, DMS-1510640.
The authors would also like to thank the anonymous referee for informing us of Bourdon's work, and suggesting the proof
of Theorem \ref{thm:MLS-rigidity-special-case} which is given in Section 3. Our original argument for this result was 
considerably more involved, and also included some unnecessary assumptions. The authors are also
indebted to the referee for asking probing questions, which led the authors to discover a serious gap in an earlier
draft of the paper. Finally, we would also like to thank Marc Bourdon and Alina Vdovina for helpful comments.


\section{Background material}\label{background}

%

\subsection{Fuchsian buildings.}

We start by summarizing basic notation and conventions on Fuchsian buildings, which were first introduced by Bourdon \cite{Bou3}. These are $2$-dimensional polyhedral complexes which satisfy a number of axioms. First, one starts with a compact convex hyperbolic polygon $R\subset \mathbb H^2$, with each angle of the form $\pi/m_i$ for some $m_i$ associated to the vertex ($m_i\in \mathbb N, m_i\geq 2$). Reflection in the geodesics extending the sides of $R$ generate a Coxeter group $W$, and the orbit of $R$ under $W$ gives a tessellation of $\mathbb H^2$. Cyclically labeling the edges of $R$ by the integers $\{1\}, \ldots, \{k\}$ (so that the vertex between the edges labelled $i$ and $i+1$ has angle $\pi/m_i$), one can apply the $W$ action to obtain a $W$-invariant labeling of the tessellation of $\mathbb H^2$; this edge labeled polyhedral $2$-complex will be denoted $A_R$, and called the {\it model apartment}.

A polygonal $2$-complex $\tilde X$ is called a $2$-dimensional hyperbolic building if it contains an edge labeling by the integers $\{1, \ldots , k\}$, along with a distinguished collection of subcomplexes $\mathcal A$ called the {\it apartments}. The individual polygons in $\tilde X$ will be called {\it chambers}. The complex is required to have the following properties:
\begin{itemize}
	\item each apartment $A\in \mathcal A$ is isomorphic, as an edge labeled polygonal complex, to the model apartment $A_R$,
	\item given any two chambers in $\tilde X$, one can find an apartment $A\in \mathcal A$ which contains the two chambers, and
	\item given any two apartments $A_1, A_2\in \mathcal A$ that share a chamber, there is an isomorphism of labeled 
$2$-complexes $\varphi: A_1\rightarrow A_2$ that fixes $A_1\cap A_2$. 
\end{itemize}
If in addition each edge labeled $i$ has a fixed number $q_i$ of incident polygons, then $\tilde X$ is called a {\it Fuchsian building}. The group $\aut(\tilde X)$ will denote the group of combinatorial (label-preserving) automorphisms of the Fuchsian building $\tilde X$.

Throughout this paper we make the standing assumption that $\tilde X$ is \emph{thick}, i.e. that every edge is contained in at least three chambers. Thus, the overall geometry of the building $\tilde X$ will involve an interplay between the geometry of the apartments, and the combinatorics of the branching along the edges.

Note that making each polygon in $\tilde X$ isometric to $R$ via the label-preserving map produces a CAT(-1) metric on $\tilde X$. However, a given polygonal $2$-complex might have several metrizations as a Fuchsian building: these correspond to varying the hyperbolic metric on $R$ while preserving the angles at the vertices. Any such variation induces a new CAT(-1) metric on $\tilde X$. The hyperbolic polygon $R$ is called {\it normal} if it has an inscribed circle that touches all its sides -- fixing the angles of a polygon to be $\{\pi/m_1, \ldots ,\pi/m_k\}$, there is a unique normal hyperbolic polygon with those given vertex angles.  We will call the quantity $\pi/m_i$ the {\it combinatorial angle} associated to the corresponding vertex. A Fuchsian building will be called {\it normal} if all metric angles are equal to the corresponding combinatorial angles {\it and} the metric on each chamber is normal. We can now state Xiangdong Xie's version of Mostow rigidity for Fuchsian buildings (see \cite{X}):

\begin{thm}[Xie]\label{xie}
Let $\tilde X_1, \tilde X_2$ be a pair of Fuchsian buildings, and let $\Gamma_i\leq \Isom(\tilde X_i)$ be a uniform lattice. Assume that we have an isomorphism $\phi: \Gamma_1\rightarrow \Gamma_2$. Then there is a $\phi$-equivariant homeomorphism $\Phi: \tilde X_1\rightarrow \tilde X_2$. Moreover, if both buildings are normal, then one can choose $\Phi$ to be a $\phi$-equivariant isometry.
\end{thm}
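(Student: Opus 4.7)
The plan is to follow the classical boundary-rigidity route, upgrading a coarse equivalence of the group actions to a fine combinatorial equivalence of the buildings. First, since each $\Gamma_i$ acts properly, freely, cocompactly by isometries on the proper geodesic CAT(-1) space $\tilde X_i$, the \v Svarc--Milnor lemma packages $\phi$ into a $\phi$-equivariant quasi-isometry $f : \tilde X_1 \to \tilde X_2$: fix basepoints $x_i$ and send $\gamma\cdot x_1 \mapsto \phi(\gamma)\cdot x_2$, then extend equivariantly over a fundamental domain. Because both $\tilde X_i$ are Gromov hyperbolic (CAT(-1)), the map $f$ extends to a $\phi$-equivariant homeomorphism $\partial f : \partial_\infty \tilde X_1 \to \partial_\infty \tilde X_2$ that is quasi-M\"obius with respect to any choice of visual metrics based at $x_i$.

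Next I would recover the building structure from its boundary trace. A Fuchsian building carries a dense family of \emph{tree-walls}: each wall is a sub-tree built from the geodesics in apartments that extend a single edge, and in $\partial_\infty \tilde X_i$ it cuts out a pair of complementary \emph{half-spaces at infinity}. The plan is to show that $\partial f$ must send tree-walls of $\tilde X_1$ bijectively to tree-walls of $\tilde X_2$. This is done by characterizing the boundary trace of a tree-wall purely in terms of the conformal/combinatorial geometry on $\partial_\infty \tilde X_i$, following Bourdon--Pajot's combinatorial-modulus analysis, so that any quasi-M\"obius map has no choice but to respect the pattern. Once $\partial f$ intertwines the two wall systems, dualizing the wall combinatorics reconstructs a $\phi$-equivariant bijection between the apartment systems, and hence between chambers, edges and vertices; gluing these polygon-by-polygon produces the desired $\phi$-equivariant homeomorphism $\Phi : \tilde X_1 \to \tilde X_2$.

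For the sharper, normal case one must push $\Phi$ from a homeomorphism to an isometry. The key extra input is that for normal Fuchsian buildings the conformal gauge on $\partial_\infty \tilde X_i$ is rigid in Bourdon's sense: the combinatorial modulus determines the visual metric up to a M\"obius self-map. Consequently the boundary map $\partial f$, already known to preserve walls, is forced to be genuinely M\"obius rather than merely quasi-M\"obius. A M\"obius boundary equivalence between proper geodesic CAT(-1) spaces is induced by an isometry of the interiors (by the Besson--Courtois--Gallot / cross-ratio argument, or directly by sending geodesics to geodesics using the preservation of cross-ratios), and this isometry is $\phi$-equivariant because $\partial f$ is.

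The main obstacle is the wall-preservation step: upgrading a quasi-M\"obius homeomorphism of $\partial_\infty \tilde X_i$ to a map that respects the tree-wall pattern is exactly where the thickness hypothesis and the fine structure of Fuchsian buildings enter, and it is what prevents this argument from extending to arbitrary locally CAT(-1) 2-complexes. Everything before and after that step is essentially formal once one has the boundary dictionary in hand.
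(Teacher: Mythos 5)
The first thing to note is that the paper does not prove this statement at all: Theorem \ref{xie} is quoted as Xie's theorem, with a pointer to \cite{X} (building on Bourdon's earlier work), so the only meaningful comparison is with the proofs in that literature. Measured against those, your outline correctly identifies the overall skeleton (\v{S}varc--Milnor to get a $\phi$-equivariant quasi-isometry, extension to a quasi-M\"obius boundary homeomorphism, preservation of the wall/apartment pattern, reconstruction of the building from that pattern), but it is not a proof: the step you yourself flag as ``the main obstacle'' --- showing that the boundary homeomorphism must respect the tree-wall pattern --- is precisely the hard content of Xie's paper, and you discharge it by appealing to Bourdon--Pajot's combinatorial-modulus analysis. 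That analysis (Loewner property, modulus rigidity of the boundary) is only available for the right-angled buildings $I_{p,q}$ treated by Bourdon and Bourdon--Pajot; for general Fuchsian buildings Xie had to replace it with a different, essentially topological/combinatorial study of how apartment circles and wall boundaries sit inside $\partial^\infty \tilde X$ (this is the source of his Lemma 3.11, which the present paper uses elsewhere). As written, the load-bearing step of your argument is therefore circular relative to the theorem being proved, and would genuinely fail outside the right-angled case if one tried to carry it out with the tools you name.

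The normal case also goes astray. You propose to upgrade $\partial f$ from quasi-M\"obius to M\"obius via a conformal-gauge rigidity statement (``combinatorial modulus determines the visual metric up to a M\"obius self-map'') and then to invoke a M\"obius-implies-isometry extension theorem for pairs of CAT(-1) spaces. Neither ingredient is established in the generality you need, and neither is needed: the actual argument produces $\Phi$ as a label-preserving combinatorial isomorphism of the buildings, and once both buildings carry their normal metrics such an isomorphism is automatically an isometry, because for fixed vertex angles $\pi/m_i$ there is a unique normal hyperbolic polygon, so corresponding chambers are isometric and these chamber isometries glue along edges. In short: the route you sketch is the right family of ideas, but the two decisive steps (wall preservation for general, not right-angled, Fuchsian buildings, and the isometry upgrade in the normal case) are respectively assumed and replaced by machinery that does not apply.
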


Another notion that will reveal itself useful is the following: inside $\tilde X$, we have a collection of {\it walls}, which are defined as follows. First, recall that each apartment in the building is (combinatorially) modeled on a $W$-invariant polygonal tessellation of $\mathbb H^2$. The geodesics extending the various sides of the polygons give a $W$-invariant collection of geodesics in $\mathbb H^2$, which are also a collection of combinatorial paths in the tessellation. This gives a distinguished collection of combinatorial paths in the model apartment $A_R$ -- its walls. Via the identification of apartments $A \in \mathcal A$ in $\tilde X$ with the model apartment $A_R$, we obtain the notion of wall in an apartment of $\tilde X$. Note that every edge in $\tilde X$ is contained in many different walls of $\tilde X$.

%

\subsection{Structure of vertex links}

For a Fuchsian building, the combinatorial axioms force some additional structure on the links of vertices: these graphs must be (thick) {\it generalized $m$-gons}  (see for instance \cite[Prop. 4.9 and 4.44]{brown}). Work of Feit and Higman \cite{FH} then implies that each $m_i$ must lie in the set $\{2, 3, 4, 6, 8\}$. Viewed as a combinatorial graph, a generalized $m$-gon has diameter $m$ and girth $2m$. Moreover, taking the collection of cycles of length $2m$ within the graph to be the set of apartments, such a graph has the structure of a (thick) spherical building (based on the action of the dihedral group $D_{2m}$ of order $2m$ acting on $S^1$).

For instance, when $m=2$, a generalized $2$-gon is just a complete bipartite graph $K_{p,q}$. When $m=3$, generalized $3$-gons correspond to the incidence structure on finite projective planes (whose classification is a notorious open problem). When $m>3$, examples are harder to find. An extensive discussion of generalized $4$-gons can be found in the book \cite{payne-thas}. For generalized $6$-gons and $8$-gons, the only known examples arise from certain incidence structures associated to some of the finite groups of Lie type (see e.g. \cite{van-Maldeghem}).

Note that, at a given vertex $v$, the edges incident to $v$ always have one of two possible (consecutive) labels. On the level of the link, this means that $lk(v)$ comes equipped with an induced $2$-coloring of the vertices by the integers $i, i+1$. Since all edges with a given label $i$ have $q_i$ incident chambers, this means that the vertices in $lk(v)$ colored $i,i+1$ have degrees $q_i, q_{i+1}$ respectively. In the case of generalized $2$-gons, the vertex $2$-coloring is the one defining the complete bipartite graph structure. For a generalized $3$-gon, the identification of the graph with the incidence structure of a finite projective plane $\mathcal P$ provides the $2$-coloring: the colors determine whether a vertex in the graph corresponds to a point or to a line in $\mathcal P$.

Split the vertex set into $\mathcal V_i, \mathcal V_{i+1}$, the set of vertices with label $i, i+1$ respectively. From the bipartite nature of the graph, the number of edges in the graph satisfies $|\mathcal E| = q_i |\mathcal V_i| = q_{i+1}|\mathcal V_{i+1}|$. Given an edge $e\in \mathcal E$, we now count the number of apartments (i.e. $2m$-cycles) passing through $e$. In a generalized $m$-gon, any path of length $m+1$ is contained in a unique apartment (see, e.g. \cite[Prop. 7.13]{Weiss-structure}). Thus, to count the number of apartments through $e$, it is enough to count the number of ways to extend $e$ to a path of length $m+1$. The number of branches we can take at each vertex alternates between a $q_i$ and a $q_{i+1}$. So if $m$ is even, we obtain that the number of edges is $N:= q_i^{m/2}q_{i+1}^{m/2}$.

If $m=3$ is odd, then we note that $q_i = q_{i+1}$. Indeed, opposite vertices in one of the $6$-cycles have labels $q_i$ and $q_{i+1}$. But for each vertex in $lk(v)$ (which corresponds to an edge in the original building) the valence corresponds to the number of chambers which share that edge. Since the branching in the ambient building occurs along walls, for two opposite vertices in an apartment in the link, the valence must be the same. So in this case, the number of apartments through an edge is $N:= q_i^3 = q_{i+1}^3$.

%

\subsection{Spaces of metrics}

Now consider a compact quotient $ X = \tilde X/\Gamma$ of a Fuchsian building, where $\Gamma\subset Aut(\tilde X)$ is a lattice in the group of combinatorial automorphisms of $\tilde X$. On the quotient space $X$, we will consider metrics which are {\it piecewise Riemannian}, i.e. whose restriction to each chamber of $X$ is a Riemannian metric, such that all the sides of the chamber are geodesics. Moreover, we will restrict to metrics which are locally negatively curved -- and thus will require the metrics on each chamber to have sectional curvature $<0$. We will denote this class of metrics by $\mathcal{M} _{neg}$. If we instead require each chamber to be hyperbolic (i.e. to have curvature $\equiv -1$), then we obtain the space $\mathcal{M}_{\equiv}$. Similarly, we can require each chamber to have curvature $\leq -1$, or curvature in the interval $[-1, 0)$. These give rise to the corresponding spaces $\mathcal{M}_{\leq}$ or $\mathcal{M}_{\geq}$, respectively. Clearly, we have a proper inclusion $\mathcal M_\leq \cup \mathcal{M}_{\equiv} \cup \mathcal M_\geq \subset \mathcal{M}_{neg}$, as well as the equality $\mathcal{M}_\equiv = \mathcal M_\leq \cap \mathcal M_\geq$. Notice that, for all of these classes of metrics, the negative curvature property imposes some constraints on the metric near the vertices of $X$: they must always satisfy Gromov's ``large link condition" (see discussion below).

In order to obtain a true analogue of hyperbolic metrics on $X$, one needs to impose some additional regularity condition. To illustrate this, consider the case of piecewise hyperbolic metrics on ordinary surfaces. One can pullback a hyperbolic metric on a surface $\Sigma _2$ of genus two via a degree two map $\Sigma_4\rightarrow \Sigma_2$ ramified over a pair of points. The resulting metric on the surface $\Sigma_4$ of genus four is piecewise hyperbolic, but has two singular points with cone angle $=4\pi$, so in particular is {\it not} hyperbolic. By analogy, an analogue of a constant curvature metric on $X$ should have ``as few'' singular points as possible.

Of course, the only possible singularities occur at the vertices of $X$. Given a vertex $\tilde v \in \tilde X$, one has several apartments passing through $\tilde v$, and one can restrict the metric to each of these apartments. The negative curvature condition implies that each of these apartments inherits a (possibly singular) negatively curved metric. This tells us that the sum of the angles around the vertex $\tilde v$ in each apartment is $\geq 2\pi$. We say that the vertex $\tilde v$ is {\it metrically non-singular} if, when restricted to each apartment through $\tilde v$, the sum of the angles at $\tilde v$ is exactly $2\pi$. A metric has {\it non-singular vertices} if every vertex is metrically non-singular. We will denote the subspace of such metrics inside $\mathcal{M}_{neg}$ by $\mathcal{M}_{neg}^{v}$. We can similarly define the subsets $\mathcal{M}_{\leq}^v$, $\mathcal M_\equiv^v$ and $\mathcal M_\geq^v$ inside the spaces $\mathcal{M}_{\leq}, \mathcal M_\equiv , \mathcal M_\geq$ respectively (the superscript $v$ is intended to denote non-singular vertices).

When $X$ is equipped with a piecewise Riemannian metric $g$, each vertex link $lk(v)$ gets an induced metric $d$. Indeed, an edge in $lk(v)$ corresponds to a chamber corner in $X$. Since the chamber $C$ has a Riemannian metric with geodesic sides, the corner has an angle $\theta$ measured in the $g$ metric. The $d$-length of the corresponding edge is defined to be the angle $\theta$. With respect to this metric, the negative curvature condition at $v$ translates to saying that every $2m$-cycle in the generalized $m$-gon $lk(v)$ has total $d$-length $\geq 2\pi$ (Gromov's ``large link'' condition).  The metric $g\in \mathcal{M}_{neg}$ lies in the subclass $ \mathcal{M}_{neg}^{v}$ precisely if for every vertex link $lk(v)$, the metric $d$-length of {\it every} $2m$-cycle is {\it exactly} $2\pi$. Of course, a similar statement holds for $\mathcal{M}_{\leq}^{v}, \mathcal M^v_\equiv , \mathcal M^v _\geq$.  As we will see below (Corollary \ref{metric-angles-equal-comb-angles}), the non-singularity condition on vertices imposes very strong constraints on the vertex angles -- they will always equal the corresponding combinatorial angle.

%

\section{MLS rigidity for metrics in $\mathcal{M}_{\equiv}^{v}$ }

This section is devoted to proving Theorem \ref{thm:MLS-rigidity-special-case}. The argument we present here was suggested to us by the anonymous referee. We start by reminding the reader of some metric properties of boundaries of CAT(-1) spaces. If $(\tilde X,d)$ is any CAT(-1) space, with boundary at infinity $\partial ^\infty (\tilde X,d)$, the {\it cross-ratio} is a function on $4$-tuples $(\xi, \xi', \eta, \eta')$ of distinct points in $\partial ^\infty (\tilde X,d)$. It is defined by:
$$[\xi \xi' \eta \eta'] := \lim_{(a, a', b, b') \to (\xi, \xi', \eta, \eta')} \text{Exp}\left(\frac{1}{2}\left(d(a, b) + d(a', b') - d(a, b')-d(a',b) \right)\right)$$
and the $4$-tuple $(a, a', b, b')$ converges radially towards $(\xi, \xi', \eta, \eta')$. If $\tilde Y$ is another CAT(-1) space, a topological embedding $\Phi: \partial ^\infty \tilde Y \rightarrow \partial ^\infty \tilde X$ is a {\it M\"obius map} if it respects the cross-ratio, i.e. for all $4$-tuples of distinct points $(\xi, \xi', \eta, \eta')$ in $\partial ^\infty Y$, we have
$$[\Phi(\xi) \Phi(\xi') \Phi(\eta) \Phi(\eta')] = [\xi \xi' \eta \eta'].$$
Note that an isometric embedding of CAT(-1) spaces automatically induces a M\"obius map between boundaries at infinity. As a consequence, for a totally geodesic subspace of a CAT(-1) space, the intrinsic cross-ratio (defined from within the subspace) coincides with the extrinsic cross-ratio (restriction of the cross-ratio from the ambient space).

\begin{proof}[Proof of Theorem \ref{thm:MLS-rigidity-special-case}]
Lifting the metrics $g_0, g_1$ to the universal cover, the identity map lifts to a quasi-isometry $\Phi: (\tilde X, \tilde g_0) \rightarrow (\tilde X, \tilde g_1)$. This induces a map between boundaries at infinity $\partial ^\infty \Phi: \partial ^\infty (\tilde X, \tilde g_0) \rightarrow \partial ^\infty (\tilde X, \tilde g_1)$. Otal showed that, if an isomorphism of fundamental groups preserves the marked length spectrum, then the induced map on the boundaries at infinity is M\"obius (see \cite{otal-2} -- the argument presented there is for negatively curved closed manifolds, but the proof extends verbatim to the $CAT(-1)$ setting).

Now let $\mathcal A$ be the collection of apartments in the building $\tilde X$ (note that this is independent of the choice of metric on $X$). Since $g_0\in \mathcal M_\equiv ^v(X)$, each apartment $A \subset \tilde X$ inherits a piecewise hyperbolic metric, with no singular vertices. So each $(A, \tilde g_0|_A)$ is a totally geodesic subspace of $(\tilde X, \tilde g_0)$, isometric to $\mathbb H^2$. The map $\partial ^\infty \Phi$ sends the circle corresponding to $\partial ^\infty (A, \tilde g_0|_A)$ to the circle in $\partial ^\infty (\tilde X, \tilde g_1)$ corresponding to the totally geodesic subspace  $(A, \tilde g_1|_A)$ (see \cite{X}). Since the map $\partial ^\infty \Phi$ preserves the cross-ratio, work of Bourdon \cite{Bou1} implies that there is an isometric embedding $F_A:(A, \tilde g_0|_A) \rightarrow (\tilde X, \tilde g_1)$ which ``fills-in'' the boundary map. This isometry must have image $(A, \tilde g_1|_A)$, which hence must also be isometric to $\mathbb H^2$. Applying this to every apartment, we see that the metric $g_1$, which was originally assumed to be in $\mathcal M_{\leq}(X)$, must actually lie in the subspace $\mathcal M^v_{\equiv}(X)$.

Finally, we claim that there is an equivariant isometry between $(\tilde X, \tilde g_0)$ and $(\tilde X, \tilde g_1)$. For each apartment $A\in \mathcal A$, we have an isometry $F_A:(A, \tilde g_0|_A) \rightarrow (A, \tilde g_1|_A)$. From Xie's work, the boundary map $\partial ^\infty F_A \equiv \partial ^\infty \Phi |_{\partial ^\infty A}$ maps endpoints of walls to endpoints of walls (see \cite[Lemma 3.11]{X}), so the isometry $F_A$ respects the tessalation of the apartment $A$, i.e. sends chambers in $A$ isometrically onto chambers in $A$. But a priori, we might have two different apartments $A, A'$ with the property that $F_A$ and $F_{A'}$ send a given chamber to two distinct chambers. So in order to build a global isometry from $(\tilde X, \tilde g_0)$ to $(\tilde X, \tilde g_1)$, we still need to check that the collection of maps $\{F_A\}_{A\in \mathcal A}$ are compatible.

Given any two apartments $A, A' \in \mathcal A$ with non-empty intersection $A \cap A' = K$, we want to check that the maps $F_A$ and $F_{A'}$ coincide on the set $K$. Let us first consider the case where $K$ is a half-space, i.e. there is a single wall $\gamma$ lying in $A\cap A'$, and $K$ coincides with the subset of $A$ (respectively $A'$) lying to one side of $\gamma$. In this special case, it is easy to verify that $F_A$ and $F_{A'}$ restrict to the same map on $K$. Indeed, Bourdon constructs the map $F_A$ as follows: given a point $p\in K$ take any two geodesics $\eta, \xi$ passing through $p\in (A, \tilde g_0)$, look at the corresponding pair of geodesics $\eta', \xi'$ in $(A, \tilde g_1)$ (obtained via the boundary map), and define $F_A(p):= \eta'\cap \xi'$. Bourdon argues that this intersection is non-empty, and independent of the choice of pairs of geodesics. The map $F_{A'}$ is defined similarly. But now if $p\in \text{Int}(K)$, one can choose a pair of geodesics $\eta, \xi \subset \text{Int}(K)$. Since $\text{Int}(K)$ is contained in both $A, A'$, this pair of geodesics can be used to see that $F_A(p)= \eta' \cap \xi' = F_{A'}(p)$. This shows that, if $K= A\cap A'$ is a half-space, then $F_A|_K \equiv F_{A'}|_K$.

For the general case, we now assume that we have a pair of apartments $A, A'$ with the property that $A\cap A'=K$ contains a chamber, and let $x$ be an interior point of this chamber. Then work of Hersonsky and Paulin \cite[Lemma 2.10]{hersonsky-paulin} produces a sequence of apartments $\{A_i\}_{i\in \mathbb N}$ with the property that $A_1=A$, each $A_i\cap A_{i+1}$ is a half-space containing $x$, and the $A_i$ converge to $A'$ in the topology of uniform convergence on compacts. From the discussion in the previous paragraph, one concludes that $F_A(x)= F_{A_i}(x)$ for all $i\in \mathbb N$, and from the uniform convergence, it is easy to deduce that $F_{A'(x)} = \lim F_{A_i}(x) = F_A(x)$. This verifies that the maps $\{F_A\}_{A\in \mathcal A}$ all coincide on a full-measure set (the interior points to chambers), and hence patch together to give a global isometry $F: (\tilde X, \tilde g_0)\rightarrow (\tilde X, \tilde g_1)$. Equivariance of the isometry follows easily from the naturality of the construction, along with the geometric nature of the maps $F_A$. Descending to the compact quotient completes the proof of Theorem \ref{thm:MLS-rigidity-special-case}.
\end{proof}

\begin{rem}
The argument presented here relies crucially on Bourdon's result in \cite{Bou1}. In the proof of the latter, the normalization of the spaces under consideration is important. The hyperbolic space mapping in must have curvature which matches the upper bound on the curvature in the target space. This is the key reason why the argument presented here does not immediately work in the setting of the {\bf Main Theorem}, where the metric $g_1$ is assumed to have piecewise curvature $\geq -1$.
\end{rem}

%

\section{$\mathcal M_{\equiv}^v(X)$ minimizes the volume}

This section is dedicated to proving Theorem \ref{thm:minimizing-the-volume}. For a vertex $v$ in our building, let $lk(v)$ denote the link of the vertex. Combinatorially, this link is a generalized $m$-gon, hence a $1$-dimensional spherical building.  The edges of the generalized $m$-gon correspond to the chamber angles at $v$, and so any piecewise Riemannian metric on the building induces a metric on the link:
\[ d_i: E(lk(v)) \to \mathbb{R}^+ .\nonumber\]
For these metrics, $Vol( -,d_i)$ is simply the sum of all edge lengths. We first argue that the vertex regularity hypothesis strongly constrains the angles.

\begin{lem}\label{combinatorial-angles}
Let $\mathcal G$ be a thick generalized $m$-gon. Assume we have a metric $d$ on $\mathcal G$ with the property that every $2m$-cycle in $\mathcal G$ has length exactly $2\pi$. Then every edge has length $\pi/m$.
\end{lem}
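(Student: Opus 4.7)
The plan is to reduce the lemma to showing that $d$ is constant on edges of $\mathcal G$; once this is known, the hypothesis $\sum_{e\in A} d(e) = 2\pi$ for an apartment $A$ with $|A| = 2m$ immediately gives the common value $\pi/m$. The reduction proceeds in two steps: (i) every length-$m$ path between a pair of opposite vertices has $d$-length exactly $\pi$, and (ii) all edges incident to a fixed vertex share a common $d$-length.

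For step (i), fix opposite vertices $v, v'$ (at graph distance $m$). Since the girth of $\mathcal G$ is $2m$, geodesics between vertices at distance $<m$ are unique; hence any two distinct length-$m$ paths $P, P'$ from $v$ to $v'$ must be internally vertex-disjoint (a shared internal vertex would yield two distinct geodesics from $v$ to it), so $P\cup P'$ is a $2m$-cycle, i.e.\ an apartment, giving $d(P) + d(P') = 2\pi$. Thickness of $\mathcal G$ guarantees at least three distinct length-$m$ paths between $v$ and $v'$ (directly checkable from the parameters of a thick generalized $m$-gon for each admissible $m \in \{2,3,4,6,8\}$); pairing any three of them off forces $d(P) = \pi$ for each.

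For step (ii), fix a vertex $u$ and any $v$ at distance $m-1$ from $u$ (possible since $\mathcal G$ has diameter $m$); let $v = v_0 - v_1 - \cdots - v_{m-1} = u$ be the unique geodesic. For any neighbor $w$ of $u$ with $w \neq v_{m-2}$, a short girth argument forces $d(v, w) = m$: parity and the triangle inequality place $d(v,w) \in \{m-2, m\}$, and the value $m-2$ would produce, via the unique shortest $v$-to-$w$ path concatenated with the edge $(w,u)$, a second length-$(m-1)$ path from $v$ to $u$, contradicting uniqueness. So $v_0 - \cdots - v_{m-1} - w$ is a length-$m$ opposite path, and by step (i) it has total length $\pi$; consequently $d(u, w) = \pi - d(v_0 - \cdots - v_{m-1})$ is independent of the choice of $w \neq v_{m-2}$. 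Using thickness a second time, pick a second vertex $v'$ at distance $m-1$ from $u$ whose geodesic to $u$ enters via a different penultimate vertex $v_{m-2}' \neq v_{m-2}$ (build $v'$ by choosing any $v_{m-2}' \neq v_{m-2}$ in the neighborhood of $u$ and extending $u - v_{m-2}'$ by $m-2$ non-backtracking steps, which remains a geodesic by girth). Intersecting the two conclusions -- a common length for all edges at $u$ except $(u, v_{m-2})$, and for all edges except $(u, v_{m-2}')$ -- shows that all edges at $u$ have the same $d$-length, which we denote $L(u)$.

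Finally, for every edge $(u_1, u_2)$ we have $L(u_1) = d(u_1, u_2) = L(u_2)$, and connectedness of $\mathcal G$ yields a global constant $L$, whose value $\pi/m$ is then forced by evaluating the apartment equation on any single apartment. The main obstacle in this plan is step (ii): the hypothesis yields only a sum constraint along length-$m$ paths, so extracting equality of individual edge-lengths requires invoking thickness twice -- first to produce a vertex $v$ for which the resulting edge-length identity excludes only one edge at $u$, and then again to produce a second $v'$ whose excluded edge is different, so that intersecting the two families covers all edges incident to $u$.
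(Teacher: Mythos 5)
Your proof is correct and takes essentially the same route as the paper's: the thickness/three-path argument shows every length-$m$ path between opposite vertices has $d$-length exactly $\pi$, then equality of the edge lengths at each vertex is extracted by varying one terminal edge of such a path, and connectedness plus the $2\pi$ normalization on a single $2m$-cycle finish the argument. The only cosmetic difference is in the middle step: the paper compares an edge at $v$ with edges at the opposite vertex $w$ by extending a common $(m-1)$-path at both of its ends, while you compare edges at a single vertex $u$ by extending two $(m-1)$-geodesics into $u$ with distinct penultimate vertices --- both variants rest on the same key fact and on the degree bound $\deg \geq 3$ coming from thickness.
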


\begin{proof}

Consider a pair of vertices $v, w$ in $\mathcal G$ at combinatorial distance $=m$. Let $\mathcal P$ denote the set of all paths of combinatorial length $m$ joining $v$ to $w$. Note that, since any two paths in $\mathcal P$ have common endpoints at $v,w$, they cannot have any other vertices in common -- for otherwise one would find a closed loop of length $<2m$, which is impossible. The concatenation of any two paths in $\mathcal P$ form a $2m$-cycle, so has length exactly $2\pi$. By the thickness hypothesis, there are at least three such paths, hence every path in $\mathcal P$ has metric length $=\pi$. Applying this argument to all pairs of antipodal vertices in $\mathcal G$, we see that {\it every} path in $\mathcal G$ of combinatorial length $m$ has metric length $=\pi$.

Now let us return to our original pair $v,w$. Every edge emanating from $v$ can be extended to a (unique) combinatorial path of length $m$ terminating at $w$ (and likewise for edges emanating from $w$). This gives a bijection between edges incident to $v$ and edges incident to $w$. Let $e_i^w$ denote the edge incident to $w$ associated to the edge $e_i^v$ incident to $v$. Choosing $i\neq j$, we have a $2m$-cycle obtained by concatenating the paths $\mathbf{p}_i$ and $\mathbf{p}_j$ of combinatorial length $m$, joining $v$ to $w$ and passing through $e_i^v, e_j^w$. Within this $2m$-cycle, we have a path of combinatorial length
$m-1$ which can be extended, at each endpoint, by $e_i^v, e_j^w$ respectively. Since every path of combinatorial length $m$ has metric length exactly $\pi$, we see that the edges $e_i^v, e_j^w$ must have the same metric length. By the thickness hypothesis, we have $\deg(v)=\deg(w) \geq 3$, and it follows that every edge at the vertex $v$ has exactly the same metric length.

Using the same argument at every vertex, and noting that $\mathcal G$ is a connected graph, we see that every edge in $\mathcal G$ has exactly the same metric length. Finally, from the fact that the $2m$-cycles have length $=2\pi$, we see that this common length must be $=\pi/m$.
\end{proof}

Applying Lemma \ref{combinatorial-angles} to the links of each vertex in $X$, gives us:

\begin{cor}\label{metric-angles-equal-comb-angles}
If $g\in \mathcal{M}_{neg}^{v}$, then at every vertex $v\in X$, all the metric angles are equal to the combinatorial angles.
\end{cor}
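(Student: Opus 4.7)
The plan is to reduce the corollary to a direct application of Lemma \ref{combinatorial-angles} at each vertex link. First I would fix an arbitrary vertex $v \in X$ and recall that, by the combinatorial axioms of a Fuchsian building, the link $lk(v)$ is a thick generalized $m_v$-gon, where $m_v$ is the integer determined by the combinatorial type of $v$ (so that the combinatorial angle attached to $v$ is $\pi/m_v$).

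Next I would translate the non-singularity hypothesis $g \in \mathcal{M}_{neg}^v$ into metric data on the link. Recall the induced metric $d$ on $lk(v)$ from the background section: each edge of $lk(v)$ corresponds to a chamber corner at $v$, and its $d$-length is the metric angle of that corner measured in $g$. The discussion preceding the corollary then says precisely that $g \in \mathcal{M}_{neg}^v$ if and only if every $2m_v$-cycle (i.e. every apartment of $lk(v)$) has total $d$-length equal to $2\pi$.

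Having verified the hypotheses of Lemma \ref{combinatorial-angles} for the pair $(lk(v), d)$, I would apply that lemma to conclude that every edge of $lk(v)$ has $d$-length exactly $\pi/m_v$. Unpacking the definition of $d$, this says that every chamber corner at $v$ has metric angle $\pi/m_v$, which is exactly the combinatorial angle attached to $v$. Since $v$ was arbitrary, the corollary follows. There is no real obstacle here: once the dictionary between vertex links and metric angles is in place, the statement is immediate, and the only thing to be careful about is that the value of $m_v$ is intrinsic to the vertex (not to a particular edge of $lk(v)$), so that the common edge length $\pi/m_v$ produced by the lemma matches the single combinatorial angle prescribed at $v$.
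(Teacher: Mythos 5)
Your proposal is correct and matches the paper's argument exactly: the paper derives the corollary by applying Lemma \ref{combinatorial-angles} to each vertex link, using the translation (given in the background section) of the non-singular vertices condition into the statement that every $2m_v$-cycle in $(lk(v),d)$ has length exactly $2\pi$. Your additional remark that $m_v$ is intrinsic to the vertex, so the common edge length $\pi/m_v$ is the single combinatorial angle at $v$, is a fair point of care but does not change the route.
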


Recall that the area of a hyperbolic (geodesic) polygon, by the Gauss-Bonnet formula, is completely determined by the number of sides and the angles at the vertices. So we also obtain:

\begin{cor}\label{constant-volume}
The volume functional is constant on the space $\mathcal{M}_{\equiv}^{v}(X)$.
\end{cor}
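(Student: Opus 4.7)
The plan is to derive this directly from Corollary \ref{metric-angles-equal-comb-angles} and the Gauss--Bonnet formula, with essentially no further work. Fix any $g\in \mathcal{M}_{\equiv}^{v}(X)$. Each chamber $C$ of $X$ is, by definition of $\mathcal{M}_{\equiv}$, a hyperbolic geodesic polygon. Combinatorially, every chamber of $\tilde{X}$ (hence of $X$, since $\Gamma$ acts by label-preserving automorphisms) is isomorphic to the model polygon $R$, so each chamber has the same number $k$ of sides and the same cyclic sequence of combinatorial angles $\pi/m_1,\dots,\pi/m_k$. By Corollary \ref{metric-angles-equal-comb-angles}, the metric angle of $C$ at its $i$-th vertex equals the combinatorial angle $\pi/m_i$.

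The Gauss--Bonnet formula for a hyperbolic $k$-gon with geodesic sides and interior angles $\theta_1,\dots,\theta_k$ gives
$$\operatorname{Area}(C) \;=\; (k-2)\pi - \sum_{i=1}^{k} \theta_i.$$
Substituting $\theta_i = \pi/m_i$, we find that $\operatorname{Area}(C)$ is a quantity that depends only on the combinatorial data of the chamber $(k, m_1, \dots, m_k)$, and in particular is independent of the chosen metric $g$.

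Summing over the (finite) collection of chambers of the compact quotient $X$, whose number is again a combinatorial invariant, we obtain that
$$\operatorname{Vol}(X,g) \;=\; \#\{\text{chambers in }X\} \cdot \Bigl((k-2)\pi - \sum_{i=1}^{k} \pi/m_i\Bigr),$$
a quantity depending only on the combinatorics of $X$. Hence $\operatorname{Vol}(X,-)$ is constant on $\mathcal{M}_{\equiv}^{v}(X)$. There is really no obstacle here: the entire content has already been absorbed into Corollary \ref{metric-angles-equal-comb-angles}, which rigidifies the vertex angles, and Gauss--Bonnet then forces rigidity of the chamber areas.
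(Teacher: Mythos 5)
Your proof is correct and is exactly the paper's argument: Corollary \ref{metric-angles-equal-comb-angles} pins the chamber angles to the combinatorial angles $\pi/m_i$, and Gauss--Bonnet then makes each chamber's area (hence the total volume) a purely combinatorial quantity. Nothing further is needed.
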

 
We are now ready to establish Theorem \ref{thm:minimizing-the-volume}

\begin{proof}[Proof of Theorem \ref{thm:minimizing-the-volume}] 

We will argue by contradiction. Assume we have a metric $g_1\in \mathcal M^v_\geq(X) \setminus \mathcal M^v_\equiv (X)$ with the property that $Vol(X, g_1)\leq Vol(X, g_0)$. Applying the Gauss-Bonnet theorem to any chamber $C$, we obtain for either metric that
\[ \int_C K_i dvol_i = -\pi(n-2) + \sum_{j=1}^n\theta^{(j)}_i\]
where $n$ is the number of sides for any chamber, $\theta_i^{(j)}$ are the interior angles of $C$, and $K_i$ is the curvature function for the metric $g_i$.  Denote
by $\mathcal P(X)$ the collection of chambers in $X$. For the whole space $X$, we have
\begin{equation}\label{gauss_bonnet}
	\sum_{C\in \mathcal P(X)} \int_C K_i dvol_i = -|\mathcal P(X)|\pi(n-2) + \sum_{C\in \mathcal P(X)} \sum_{j=1}^n\theta^{(j)}_i.
\end{equation}
Under the assumptions of the Theorem, we have
\begin{align*}
	\sum_{C\in \mathcal P(X)} \int_C K_0 dvol_0 & = \sum_{C\in \mathcal P(X)} \int_C -1 \hskip 2pt dvol_0 \\
			& = - Vol(X, g_0) \\
			& \leq - Vol(X, g_1) \\
			& =  \sum_{C\in \mathcal P(X)} \int_C -1 \hskip 2pt dvol_1 \\
			& < \sum_{C\in \mathcal P(X)} \int_C K_1 dvol_1.
\end{align*}
(The last inequality is strict, since from the assumption that $g_1\in  \mathcal M^v_\geq(X) \setminus \mathcal M^v_\equiv (X)$, there must be at least one interior point on some chamber where the curvature $K_1$ is greater than $-1$.) Since the quantity $-|\mathcal P(X)|\pi(n-2)$ is independent of the choice of metric, applying equation (\ref{gauss_bonnet}) gives us
\[ \sum_{C\in \mathcal P(X)} \sum_{j=1}^n\theta^{(j)}_0 <  \sum_{C\in \mathcal P(X)} \sum_{j=1}^n\theta^{(j)}_1. \]
But each of these two sums can be interpreted as $\sum_{v}Vol(lk(v), d_i)$ for the respective metrics. Hence, there must be at least one vertex $v$ whose $d_0$-volume is strictly smaller than its $d_1$-volume. But by Corollary \ref{metric-angles-equal-comb-angles}, the vertex regularity hypothesis forces the volumes of the links to be equal, a contradiction. This completes the proof of the Theorem \ref{thm:minimizing-the-volume}.
\end{proof}

%

\section{Geodesic flows and geodesic currents on Fuchsian buildings}\label{sec:flows and currents}

In this section, we set up the terminology needed for the proof of Theorem 3.

\subsection{Geodesic flow}
Let $\tilde X$ be a hyperbolic building, equipped with a $\textrm{CAT}(-\epsilon^2)$ metric $g$ for some $\epsilon>0$, and $X= \tilde X / \Gamma$ where $\Gamma \leq \aut(X)$ acts freely, isometrically, and cocompactly. We make the following definitions:

\begin{itemize}
	\item Let $G_g(\tilde X)$ be the set of unit-speed parametrizations of geodesics in $(\tilde X,g)$ equipped with the compact-open topology. Since $\tilde X$ is $\textrm{CAT}(-\epsilon^2)$, $G_g(\tilde X) \cong (\partial^\infty \tilde X\times \partial^\infty \tilde X \times \mathbb{R})\setminus (\Delta\times \mathbb R)$ where $\Delta$ is the diagonal in $\partial^\infty \tilde X\times \partial^\infty \tilde X$. The quotient space $G_g(X):=G_g(\tilde X)/\Gamma$ by the naturally induced $\Gamma$-action is the space of unit-speed geodesic parametrizations on $X= \tilde X/\Gamma$.  
	\item As in \cite[Section 3]{ball_brin}, let $S'$ denote the set of all unit length vectors based at a point in $X^{(1)}\setminus X^{(0)}$ (i.e. at an edge but not a vertex) and pointing into a chamber.  $S'C$ is the set pointing into a particular chamber $C$.  $S'_xC$ is the set pointing into $C$ and based at $x$.  $S'_x = \cup S'_xC_i$ is the union over all chambers adjacent to $x$.
	\item  For $v\in S'C$, let $I(v)\in S'C$ to be the vector tangent to the geodesic segment through C generated by $v$ and pointing the opposite direction.  Let $F(v) \subset S'$ be the set of all vectors based at the footpoint of $I(v)$ which geodesically extend the segment defined by $v$.  Let $W$ be the set of all bi-infinite sequences $(w_n)_{n\in \mathbb{Z}}$ such that $w_{n+1}\in F(w_n)$ for all $n$.
	\item Let $\sigma$ be the left shift on $W$.
	\item Let $t_v$ be the length of the geodesic segment in $C$ generated by $v$.
\end{itemize}

The geodesic flow on $G_g(\tilde X)$ is $g_t(\gamma(s)) = \gamma(s+t)$. It can also be realized by the suspension flow over $\sigma: W \to W$ with suspension function $\psi((w_n)) = t_{w_0}$.  Denote the suspension flow by $f_t: W_\psi \to W_\psi$ where 
\[W_\psi=\{((w_n),s): 0\leq s \leq \psi((w_n))\}/[((w_n),\psi((w_n)))\sim(\sigma((w_n)),0)]\]
and $f_t((w_n),s) = ((w_n),s+t)$. An explicit conjugacy between the suspension flow and the geodesic flow on the space $G_g'(X)$ of all geodesics which do not hit a vertex is as follows: $h: G_g'(X) \to W_\psi$ by $\psi(\gamma(t)) = ((w^\gamma_n), t^\gamma)$ where $(w^\gamma_n)$ is the trajectory of $\gamma$ through $S'$ indexed so that $w_0$ is $\dot\gamma(-t^\gamma)$ for $t^\gamma$ the smallest $t\geq 0$ for which $\dot\gamma(-t^\gamma)$ belongs to $S'$.


%

\subsection{Liouville measure}\label{sec:liou}

We also want an analogue of Liouville measure.  We use the one constructed in \cite{ball_brin}.  On $S'$ define $\mu$ by 
\[ d\mu(v) = \cos\theta(v) d\lambda_x(v)dx \]
where $\theta(v)$ is the angle between $v$ and the normal to the edge it is based at, $\lambda_x$ is the Lebesgue measure 
on $S'_x$ and $dx$ is the volume on the edge.  This measure is invariant under $I$ by an argument well known from billiard 
dynamics (see e.g. \cite{CFS}). 

Consider $W$ as the state space for a Markov chain with transition probabilities

\begin{equation*}
	p(v,w) = \left\{
	\begin{array}{cc}
		\frac{1}{|F(v)|} & \mbox{if } w\in F(v) \\
		0 & \mbox{else.}
	\end{array}\right.
\end{equation*}
Ballmann and Brin prove that $\mu$ is a stationary measure for this Markov chain (\cite{ball_brin} Prop 3.3) and hence $\mu$ 
induces a shift invariant measure $\mu^*$ on the shift space $W$. Under the suspension flow on $W_\psi$, $\mu^*\times dt$ is invariant. Using the conjugacy $h$, pull back this measure to $G_g'(X)\subset G_g(X)$ and denote the resulting geodesic flow-invariant measure induced on $G_g(X)$ by $L_g$. As Ballmann and Brin remark, $\mu\times dt$ is the Liouville measure on the interior of each chamber $C$, so $L_g$ is a natural choice as a Liouville measure analogue on $G_g(X)$.

We close this section with a quick remark about geodesics along walls, which will be used in the calculations of Section \ref{sec:compute}.


\begin{lem}\label{lem:walls zero}
Let $g$ be a metric in $\mathcal{M}_{neg}$. Let $T$ be the set of geodesics which are tangent to a wall at some point.  Then $L_g(T)=0$.
\end{lem}

\begin{proof}
By a standing assumption, each edge in $X$ is geodesic. Thus, any geodesic which is tangent to a wall at some point will hit a vertex. These geodesics are omitted in the construction of $L_g$, and hence form a zero measure set when we think of $L_g$ as a measure on all of $G_g(X)$.
\end{proof}

\vskip 10pt

%

\subsection{Geodesic currents}\label{sec:currents}

\begin{defn}
Let $\mathscr{G}(\tilde X)$ denote the space of (un-parametrized and un-oriented) geodesics in $\tilde X$.  
\end{defn}

We note that for any negatively curved metric $g$, $\mathscr{G}(\tilde X) = G_g(\tilde X)/\sim$, where $\gamma \sim \eta$ if they agree up to a reparametrization. We equip $\mathscr{G}(\tilde X)$ with the quotient topology induced from $G_g(\tilde X)$. We have a $\Gamma$-equivariant identification 
\[\mathscr{G}(\tilde X) \cong [(\partial^\infty\tilde X \times \partial^\infty\tilde X) \setminus \Delta]/[(\xi_1,\xi_2)\sim(\xi_2,\xi_1)],\]
so $\mathscr{G}(\tilde X)$ is independent of the choice of metric.

\begin{rem}
At several points below, we will deal with elements of $\mathscr{G}(\tilde X)$ by representing them by elements of $G_g(\tilde X)$. We adopt the notational convention that if $c\in \mathscr{G}(\tilde X)$, then $\bar c$ denotes a geodesic in $G_g(\tilde X)$ representing it. The choice of the metric $g$ will either be explicit, or clear from context.
\end{rem}

\begin{defn}
A \emph{geodesic current} on $X = \tilde X/\Gamma$ is a positive Radon measure on $\mathscr{G}(\tilde X)$ which is $\Gamma$-invariant and cofinite (recall that a Radon measure is a Borel measure which is both inner regular and locally finite). Let $\mathscr{C}(X)$ denote the space of geodesic currents.  We equip $\mathscr{C}(X)$ with the weak-$^*$ topology, under which it is complete (see, e.g. Prop. 2 of \cite{bonahon_geometry}).
\end{defn}

\begin{exmp} The following are geodesic currents on a compact Fuchsian building quotient $X$ which will play a role in our later proofs:
\begin{itemize}
	\item Any geodesic flow-invariant Radon measure on $G_g(\tilde X)/\Gamma$ induces a geodesic current on $X$, so $L_g$ induces the Liouville current, also denoted $L_g$. The construction of the Liouville measure gives the following \emph{local expression} for the Liouville current. For any $g$-geodesic segment $\sigma$, parametrize the geodesics transversal to it by $(x,\theta,(w_n))$ where $x$ is the point of intersection with $\sigma$, $\theta$ is the angle between the geodesic direction and the normal to $\sigma$ at this point, and $(w_n)$ is the sequence in $W$ to which the geodesic corresponds. Then
	\[ dL_g = \cos\theta d\theta dx d\nu \]
	where $\nu$ is the Markov measure with the transition probabilities described in the previous subsection.
	\item For any primitive closed geodesic $\alpha$ in $X$, the sum of Dirac masses on each element of the $\Gamma$-orbit of $\tilde \alpha$ is a geodesic current, denoted by $\langle \alpha \rangle$.  
	\item For a non-primitive closed geodesic $\beta = \alpha ^n$, define $\langle\beta\rangle := n\langle\alpha\rangle$.  
\end{itemize}
\end{exmp}

\begin{prop}\label{density}
Let $\mathcal{C}\subset \mathscr{C}(X)$ be the set of currents which are supported on a single closed geodesic.  (I.e., it consists of all positive multiples of the currents $\langle\alpha\rangle$ described above.)  Then $\mathcal{C}$ is dense in $\mathcal{C}(X/\Gamma)$.  
\end{prop}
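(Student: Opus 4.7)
The plan is to follow Bonahon's classical density argument \cite{bonahon_geometry} for geodesic currents on hyperbolic surfaces, which extends naturally to the CAT(-1) setting. The essential geometric input is that pairs of endpoints of axes of hyperbolic elements of $\Gamma$ are dense in $(\partial^\infty \tilde X \times \partial^\infty \tilde X) \setminus \Delta$; this is a standard consequence of $\Gamma$ being a non-elementary group acting cocompactly by isometries on the CAT(-1) space $\tilde X$.

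I would begin by identifying $\mathcal{G}(\tilde X)$ with $((\partial^\infty \tilde X)^2 \setminus \Delta)/\mathbb Z_2$ and fixing a relatively compact fundamental domain $D$ for the diagonal $\Gamma$-action. Restriction $\mu \mapsto \mu|_D$ then gives a weak-$*$ homeomorphism between geodesic currents on $X$ and finite positive Borel measures on $\overline D$, so it suffices to approximate $\mu|_D$ weakly by restrictions of the form $(c \cdot \mu_\alpha)|_D$ with $\alpha$ a closed geodesic in $X$ and $c > 0$. First, I would approximate $\mu|_D$ by a finite weighted sum $\sum_{i=1}^N c_i\, \delta_{g_i}$ of Dirac masses, and then use the density of axis endpoints to replace each $g_i$ by a lift of the axis of a hyperbolic element $\gamma_i \in \Gamma$, whose projection is a closed geodesic $\alpha_i$ in $X$. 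After clearing denominators, we may assume each $c_i$ is rational with common denominator $N$, reducing the problem to producing a \emph{single} closed geodesic $\alpha$ whose normalized current approximates $\sum c_i\, \mu_{\alpha_i}$.

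This final step is the main obstacle, and it is handled via a shadowing/closing argument in the CAT(-1) setting: one concatenates appropriate integer multiples of lifts of the axes of the $\gamma_i$'s to form a long broken-geodesic loop, then uses the exponential divergence of geodesics in $\tilde X$ (equivalently, the specification property for the geodesic flow on the compact locally CAT(-1) space $X$) to straighten this loop into a genuine closed geodesic $\alpha$. Rescaling by an appropriate constant $c$ yields $c\, \mu_\alpha \in \mathcal C$ that is weak-$*$ close to $\mu$, establishing the density of $\mathcal C$ in $\mathcal C(X/\Gamma)$.
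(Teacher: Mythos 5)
The closing/specification step at the end of your plan is the right tool, but the reduction that precedes it contains a genuine gap: you conflate the Dirac mass at a \emph{single lift} of a closed geodesic with the geodesic \emph{current} it defines. If $\gamma_i\in\Gamma$ is hyperbolic with axis $\tilde\alpha_i$ whose endpoint pair is close to that of $g_i$, then $\delta_{\tilde\alpha_i}$ is indeed close to $\delta_{g_i}$ as a measure on your compact test region; but the current $\mu_{\alpha_i}$ is the sum of Dirac masses over the whole $\Gamma$-orbit of $\tilde\alpha_i$, and its restriction to that region has one atom for (roughly) every time the closed geodesic $\alpha_i$ crosses the corresponding compact part of $X$ --- on the order of $\mathrm{length}(\alpha_i)$ atoms, distributed according to how $\alpha_i$ spreads itself over $X$, not concentrated near $g_i$. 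So no choice of weights makes $\sum_i c_i\,\mu_{\alpha_i}$ close to $\mu$ merely because the axes' endpoints approximate the $g_i$; and the single closed geodesic $\alpha$ obtained by concatenating and straightening blocks of these axes has $c\,\mu_\alpha$ close to a normalization of $\sum_i n_i\,\mu_{\alpha_i}$, hence again not close to $\mu$. Weak-$*$ proximity of currents is governed by the full occupation distribution of the closed geodesic in $X$, not by the position of one endpoint pair in $\partial^\infty\tilde X\times\partial^\infty\tilde X$. (A smaller issue of the same flavor: the $\Gamma$-action on $\mathcal{G}(\tilde X)$ is not properly discontinuous --- orbits of geodesics accumulate --- so there is no relatively compact fundamental domain $D$ in the sense you use, and restriction is not a homeomorphism onto finite measures on $\overline D$; one should instead test against finitely many compactly supported functions.)

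The repair is precisely the content of the argument the paper invokes: \cite[Theorem 7]{bonahon_negative}, whose proof (Section 3 of that paper) uses only the negative curvature of the Cayley graph and therefore, as the paper observes, applies verbatim to $\Gamma$ acting on $\tilde X$. In your dynamical language this amounts to a Sigmund-type argument rather than an endpoint-matching one: pass to the geodesic-flow--invariant measure on $G(X)$ determined by $\mu$, approximate it by a finite convex combination of ergodic measures, choose long orbit segments of generic geodesics that equidistribute toward those ergodic pieces (Birkhoff), and only then apply your specification/closing step to these segments; the resulting closed geodesic has occupation measure, hence current, close to $\mu$. Feeding arbitrary nearby axes into the closing step, as your plan does, gives no control on the occupation measure and so cannot conclude.
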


\begin{proof} 
In \cite[Theorem 7]{bonahon_negative}, Bonahon establishes the analogous property for geodesic currents on $\delta$-hyperbolic groups, with a proof given in \cite[Section 3]{bonahon_negative}. Bonahon's argument makes use of the Cayley graph $Cay(G)$ of $G$, but only relies on negative curvature properties of the Cayley graph -- the group structure plays no role in the proof. A careful reading of the arguments shows that it applies verbatim in our setting.
\end{proof}

%

\section{Transversality}\label{sec:transverse}

The key tool in the proof of Theorem \ref{thm:MLS-determines-volume}, as in Otal's original work on MLS rigidity and Croke and Dairbekov's work on MLS and volume, is the intersection pairing for geodesic currents. This is a finite, bilinear pairing on the space of currents, which recovers the intersection number for geodesics when the currents in question are Dirac measures on closed geodesics, and can also recover lengths of closed geodesics and the total volume of the space. For surfaces it is defined by
\[ i(\mu,\lambda) = (\mu\times \lambda)(DG(X)) \]
where $DG(X)$ is the set of all transversally intersecting pairs of unparametrized geodesics on $X$.

The main problem in extending this tool to the building case is the fact that $DG(X)$ is not topologically or combinatorially defined for buildings. For a surface, transverse intersection of geodesics is detected by linking of their endpoints in the circle $\partial^\infty\tilde S$. This is no longer the case for buildings, and one can imagine a pair of geodesics in $\mathscr{G}(\tilde X)$ whose representatives as $g_0$-geodesics intersect, but whose $g_1$-geodesic representatives do not intersect due to the branching of the building (see, e.g., Figure \ref{fig:transv prob} below).

Therefore, we must introduce an adjusted version of the intersection pairing which uses transverse intersections which can be detected purely topologically or combinatorially. We will then prove that it retains enough of the necessary properties of $i(-,-)$ for our purposes.

We begin with a definition of transversality for geodesics in an apartment of $(\tilde X,g):$

\begin{defn}\label{defn:apt transversal}
Let $A$ be an apartment in $\tilde X$ and $c,d$ two geodesics in $\mathscr{G}(\tilde X)$ which are contained in $A$. We say $\gamma$ and $\eta$ are \emph{transversal in $A$} if the endpoints $\gamma(\pm \infty)$ and $\eta(\pm \infty)$ are distinct and linked in $\partial^\infty A$. (See Figure \ref{fig:apt transv}.)
\end{defn}

\begin{figure}[ht]
\begin{center}
\begin{tikzpicture}

\draw (0,0) ellipse (2 and 1);
\draw [line width =1, red] (1.4,-.7)--(-1.4,.7);
\draw [line width =1, blue] (-1.95,.2)--(-.45,.2);
\draw [line width =1, blue] (-.45,.2)--(.35,-.2);
\draw [line width =1, blue] (.35,-.2)--(1.95,-.2);

\node [red] at (-1.8,1) {$\gamma(-\infty)$} ;
\node [red] at (1.8,-1) {$\gamma(+\infty)$} ;

\node [blue] at (-2.6,.2) {$\eta(-\infty)$} ;
\node [blue] at (2.6,-.2) {$\eta(+\infty)$} ;

\node at (1.8,1) {$(A,g)$} ;

\end{tikzpicture}
\end{center}
\caption{$\gamma$ and $\eta$ are transversal in $A$. Their representatives in $G_g(\tilde X)$ are shown. For this metric, the geodesics meet at a large-angle vertex, share a segment, then diverge at a large-angle vertex.}\label{fig:apt transv}
\end{figure}
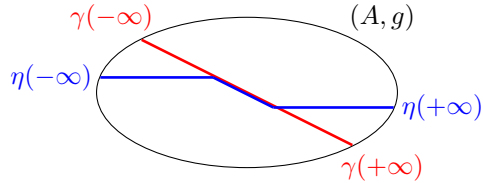

This definition is independent of $g$. We can (and sometimes will) apply this notion of transversality to pairs of geodesics in $G_g(\tilde X)$.

Note that for a particular metric $g$, if there are some vertices in $A$ surrounded by total angle $>2\pi$ it is possible that the $g$-realizations of two transversal geodesics meet at some vertex, agree along a segment, then diverge at a second vertex (as in Figure \ref{fig:apt transv}). Such behavior only happens along segments between vertices by our assumptions on the metric $g$.

With this in hand, we define two notions of transversality for geodesics in $\mathscr{G}(\tilde X)$. $N_\epsilon(K)$ denotes the $\epsilon$-neighborhood of the set $K$.

\begin{defn}\label{defn:transversal}
Let $\gamma, \eta \in \mathscr{G}(\tilde X)$. We say these geodesics are \emph{transversal for $g$} if for their $G_g(\tilde X)$ representatives $\bar \gamma$ and $\bar \eta$:
\begin{itemize}
	\item $\bar\gamma \cap \bar\eta \neq \emptyset$, and
	\item there exists some apartment $A$ in $\tilde X$ containing $\bar\gamma \cap \bar\eta$ such that for some $\epsilon>0$, $\bar\gamma \cap N_\epsilon(\bar\gamma \cap \bar\eta) \cap A$ and $\bar\eta \cap N_\epsilon(\bar\gamma \cap \bar\eta) \cap A$ are the intersections with $N_\epsilon(\bar\gamma \cap \bar\eta)$ of two transversal geodesics in $A$ in the sense of Definition \ref{defn:apt transversal}.
\end{itemize}
We write $\gamma \transv_g \eta$ if $\gamma$ and $\eta$ are transversal for $g$. (See Figure \ref{fig:g transv} for an illustration of $\gamma \transv_g \eta$.)
\end{defn}

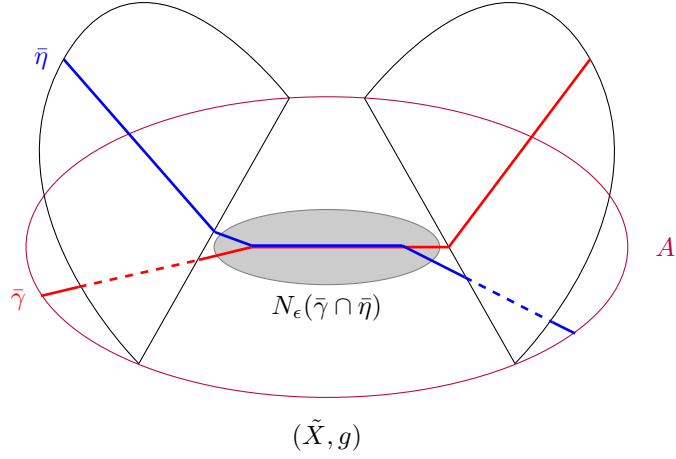
\begin{figure}[ht]
\begin{center}
\begin{tikzpicture}

\draw [gray=!40, fill=gray!40] (0,0) ellipse (1.5 and .5) ;

\draw [purple] (0,0) ellipse (4 and 2);
\draw  (2.5,-1.55)--(.5,1.98);
\draw  (-2.5,-1.55)--(-.5,1.98);

\draw plot [line width = 2, smooth, tension=2] coordinates { (.5,1.98) (3.5,2.5) (2.5,-1.55)};
\draw plot [line width = 1, smooth, tension=2] coordinates { (-.5,1.98) (-3.5,2.5) (-2.5,-1.55)};

\draw [line width = 1, red] (-3.8,-.65) -- (-3.3,-.53) ;
\draw [line width = 1, red, dashed] (-3.3,-.53) -- (-1.7,-.17) ;
\draw [line width = 1, red] (-1.7,-.17) -- (-1,0) ;
\draw [line width = 1, red] (-1,0) -- (1.62,0) ;
\draw [line width = 1, red] (1.62,0) -- (3.5,2.5) ;

\draw [line width = 1, blue] (-3.5,2.5) -- (-1.49,.2) ;
\draw [line width = 1, blue] (-1.49,.2) -- (-1,0.02) ;
\draw [line width = 1, blue] (-1,.02) -- (1,0.02) ;
\draw [line width = 1, blue]  (1,0.02) -- (1.83, -.4) ;
\draw [line width = 1, dashed, blue]  (1.83, -.4) -- (3,-1) ;
\draw [line width = 1, blue]  (3,-1) -- (3.3,-1.15);

\node at (0,-2.5) {$(\tilde X,g)$} ;
\node [purple] at (4.5,0) {$A$} ;
\node [red] at (-4.1,-.7) {$\bar \gamma$} ;
\node [blue] at (-3.8,2.5) {$\bar \eta$} ;
\node at (0,-.8) {$N_\epsilon(\bar \gamma \cap \bar \eta)$} ;

\end{tikzpicture}
\end{center}
\caption{$\gamma$ and $\eta$ are transversal for $g$, as they agree on $N_\epsilon(\bar\gamma \cap \bar \eta)$ with geodesics which are transversal in the apartment $A$. $\gamma$ and $\eta$ themselves are not transversal in any apartment.}\label{fig:g transv}
\end{figure}

We note that $\gamma \transv_g \eta$ is independent of the choice of the parametrizations of these geodesics, but is \emph{not} independent of the choice of $g$. In fact, it may be the case that two geodesics are transversal for $g_0$ but disjoint for $g_1$ (see Figure \ref{fig:transv prob}).

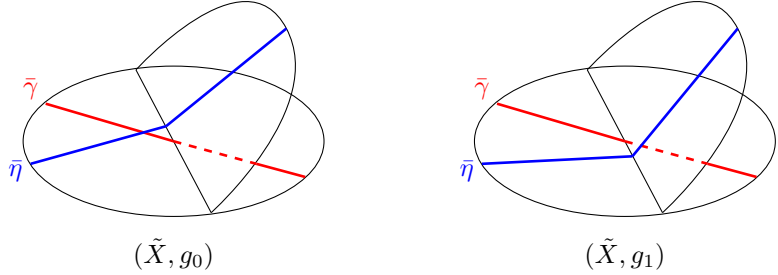
\begin{figure}[ht]
\begin{center}
\begin{tikzpicture}

\draw (-3,0) ellipse (2 and 1);
\draw (-2.5,-.95)--(-3.5,.95);
\draw plot [smooth, tension=2] coordinates { (-3.5,.95) (-1.5,1.5) (-2.5,-.95)};

\draw [line width=1, red] (-4.7,.5) -- (-3,0);
\draw [line width=1, red, dashed] (-3,0) -- (-1.9,-.3);
\draw [line width=1, red] (-1.9,-.3) -- (-1.25,-.475);

\draw [line width=1, blue] (-4.9,-.3) -- (-3.1,.2);
\draw [line width=1, blue] (-3.1,.2) -- (-1.5, 1.5);

\node [red] at (-4.9,.7) {$\bar \gamma$};
\node [blue] at (-5.1,-.4) {$\bar \eta$};
\node at (-3,-1.5) {$(\tilde X,g_0)$};

\draw (3,0) ellipse (2 and 1);
\draw (3.5,-.95)--(2.5,.95);
\draw plot [smooth, tension=2] coordinates { (2.5,.95) (4.5,1.5) (3.5,-.95)};

\draw [line width=1, red] (1.3,.5) -- (3,0);
\draw [line width=1, red, dashed] (3,0) -- (4.1,-.3);
\draw [line width=1, red] (4.1,-.3) -- (4.75,-.475);

\draw [line width=1, blue] (1.1,-.3) -- (3.1,-.2);
\draw [line width=1, blue] (3.1,-.2) -- (4.5, 1.5);

\node [red] at (1.1,.7) {$\bar \gamma$};
\node [blue] at (.9,-.4) {$\bar \eta$};
\node at (3,-1.5) {$(\tilde X,g_1)$};

\end{tikzpicture}
\end{center}
\caption{$\gamma$ and $\eta$ are transversal for $g_0$, but not for $g_1$.}\label{fig:transv prob}
\end{figure}

\begin{defn}\label{defn:Dg}
For a fixed metric $g$, let $D_g(\tilde X) \subset \mathscr{G}(\tilde X)\times \mathscr{G}(\tilde X)$ be the set of pairs $(\gamma, \eta)$ such that $\bar \gamma \transv_g \bar \eta$, where $\bar \gamma$ and $\bar \eta$ are any $G_g(\tilde X)$-representatives of $\gamma$ and $\eta$.
\end{defn}

Again, we emphasize that $D_g(\tilde X)$ depends on $g$.

For a notion of transversality which does not depend on $g$ we introduce the following:

\begin{defn}\label{defn:ess transv}
Let $(\gamma, \eta) \in \mathscr{G}(\tilde X) \times \mathscr{G}(\tilde X)$. We say that $\gamma$ and $\eta$ are \emph{essentially transversal} and write $\gamma \transv^* \eta$ if there exists some apartment $A\subset \tilde X$ containing $\gamma$ and $\eta$ such that $\gamma$ and $\eta$ are transversal in $A$ (as in Definition \ref{defn:apt transversal}).
\end{defn}

Being contained in an apartment and being transversal in an apartment do not depend on $g$, so $\transv^*$ is independent of $g$.

\begin{defn}\label{defn:Dg*}
Let $\mathscr{D}^*(\tilde X) \subset \mathscr{G}(\tilde X)\times \mathscr{G}(\tilde X)$ be the set of pairs $(\gamma, \eta)$ such that $\gamma \transv^* \eta$.
\end{defn}

We now collect a few simple but essential properties of the sets $D_g(\tilde X)$ and $\mathscr{D}^*(\tilde X)$.

\begin{lem}
For all $\alpha \in \Gamma$, $\alpha \cdot D_g(\tilde X) = D_g(\tilde X)$ and $\alpha \cdot \mathscr{D}^*(\tilde X) = \mathscr{D}^*(\tilde X)$.
\end{lem}

\begin{proof}
This follows from the definitions using (in the case of $D_g(\tilde X)$) that $\alpha$ is an isometry, and (in the case of $\mathscr{D}^*(\tilde X)$) that $\alpha$ is a combinatorial isomorphism.
\end{proof}

\begin{lem}
$D_g(\tilde X)$ and $\mathscr{D}^*(\tilde X)$ are symmetric in the sense that exchanging the two coordinates of any element produces another element in the set.
\end{lem}

\begin{proof}
This is clear from the definitions.
\end{proof}

\begin{lem}
Let $\phi: \tilde X_0 \to \tilde X_1$ be a combinatorial isomorphism Fuchsian buildings. Then $\phi(\mathscr{D}^*(\tilde X_0)) = \mathscr{D}^*(\tilde X_1)$.
\end{lem}

\begin{proof}
A combinatorial isomorphism maps apartments to apartments and preserves the linking of endpoints in an apartment. The result is then immediate from the definition of $\mathscr{D}^*(\tilde X)$.
\end{proof}

%

\section{Intersection pairing(s)}\label{sec:pairings}

Corresponding to our two notions of transverse geodesics, we introduce two definitions of the intersection pairing for geodesic currents.

\begin{defn}\label{defn:std pairing}
Fix a metric $g$ on $X$ and let $\mu, \nu \in \mathscr{C}(X)$. The $\Gamma$-invariant measures $\mu$ and $\nu$ descend to finite measures $\bar \mu$ and $\bar \nu$ on $\mathscr{G}(\tilde X)/\Gamma$. The \emph{intersection pairing of $\mu$ and $\nu$ with respect to $g$} is
\[ i_g(\mu,\nu) := (\bar\mu \times \bar\nu)(D_g(\tilde X)/\Gamma). \]
Equivalently, if we fix a measurable fundamental domain $\mathscr{F}$ for the action of $\Gamma$ on $D_g(\tilde X)$,
\[ i_g(\mu, \nu) := (\mu \times \nu)(\mathscr{F}).\]
\end{defn}

Because of the role played by $g$ in defining $D_g(\tilde X)$ this pairing depends on $g$. For this reason it is not the pairing we want to use. To build a pairing which depends on $g$ only through some possible dependence of $\mu$ and $\nu$ on $g$ we must make some modifications.

\begin{defn}\label{defn:weighting}
Define $\varpi: \mathscr{D}^*(\tilde X) \to \mathbb{N}$ as follows. Fix an apartment $A$ containing $\gamma$ and $\eta$ and let $\mathcal{W}(\gamma, \eta)$ be the set of all walls $w$ in $A$ which are transversal in $A$ to both $\gamma$ and $\eta$. Note that $w\transv^* \gamma$ and $w \transv^* \eta$. Let
\[ \varpi(\gamma, \eta) := \prod_{w \in \mathcal{W}(\gamma, \eta)} (q(w)-1) \]
if $\mathcal{W}(\gamma, \eta)$ is nonempty and $\varpi(\gamma, \eta):=1$ if $\mathcal{W}(\gamma, \eta)$ is empty. Recall that $q(w)$ is the multiplicity of the wall -- the number of chambers containing any edge in $w$.
\end{defn}

To verify that $\varpi$ is well-defined, we need to prove the following two lemmas.

\begin{lem}\label{lem:transv}
If $A$ and $A'$ are apartments of $\tilde X$ in which $\gamma$ and $\eta$ intersect transversally, and $\mathcal{W}$, $\mathcal{W}'$ are the corresponding sets of walls transversal to the pair, then there is a bijective, multiplicity-preserving map between $\mathcal{W}$ and $\mathcal{W}'$.
\end{lem}

\begin{proof}
We have noted above that the $\transv^*$ condition which specifies which walls are in $\mathcal{W}$ and $\mathcal{W}'$ is independent of the choice of metric on $\tilde X$. Since $\tilde X$ is a Fuchsian building, we can fix a hyperbolic metric $g_0$ on $\tilde X$, that is, a metric in $\mathcal{M}_\equiv^v(\tilde X)$. Choosing this metric simplifies the geometry we use in the following argument.

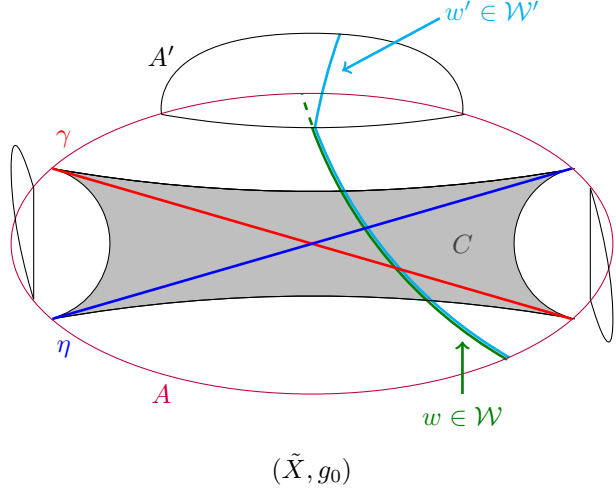
\begin{figure}[ht]
\begin{center}
\begin{tikzpicture}

\draw [fill=gray!50] (0,0) ellipse (4 and 2);

\draw [fill=white] (-3.45,1) arc (260:280:20) ;
\draw [white, fill=white] (-3.45,.99) -- (-3.45,2.2) -- (3.45,2.2) -- (3.45,.99) ;
\draw (-3.45,1) arc (260:280:20) ;
\draw [fill=white] (-3.45,-1) arc (100:80:20) ;
\draw [white, fill=white] (-3.45,-.99) -- (-3.45,-2.2) -- (3.45,-2.2) -- (3.45,-.99) ;
\draw [fill=white] (-3.45,1) arc (75:-75:1.03) ;
\draw [white, fill=white] (-3.44,1) -- (-3.44,-1) -- (-4.1,-1) -- (-4.1,1) ;
\draw [fill=white] (3.45,1) arc (105:255:1.03) ;
\draw [white, fill=white] (3.44,1) -- (3.44,-1) -- (4.1,-1) -- (4.1,1) ;

\draw [green!50!black, line width=1] (0,1.55) arc (200:240:5.9);
\draw [dashed, green!50!black, line width=1] (0,1.58) arc (200:195.8:5.9);
\draw [cyan, line width=1] (0.04,1.55) arc (200.3:240:5.9);
\draw [cyan, line width=1] (0.04,1.55) arc (170:160:7.4);

\draw  (-3.45,-1) arc (100:80:20) ;
\draw  (-3.45,1) arc (260:280:20) ;

\draw [purple] (0,0) ellipse (4 and 2);

\draw [line width=1, red] (-3.45,1) -- (3.45,-1);
\draw [line width=1, blue] (-3.45,-1) -- (3.45,1);

\draw (-3.7,-.74) -- (-3.7,.74);
\draw plot [smooth, tension=2] coordinates { (-3.7,-.74) (-4,1) (-3.7,.74)};

\draw (3.7,-.74) -- (3.7,.74);
\draw plot [smooth, tension=2] coordinates { (3.7,-.74) (4,-1) (3.7,.74)};
\draw (-2,1.72) arc(260:280:11.6) ;
\draw plot [smooth, tension=2] coordinates { (-2,1.72) (0,2.8) (2,1.72)};

\node at (0,-3) {$(\tilde X, g_0)$} ;
\node [purple] at (-2,-2) {$A$} ;
\node at (-2,2.5) {$A'$} ;

\draw [line width=1, green!50!black, ->] (2,-2) -- (2,-1.3) ;
\node [green!50!black] at (2,-2.3) {$w \in \mathcal{W}$} ;

\draw [line width=1, cyan, ->] (1.7,3) -- (.4,2.3) ;
\node [cyan] at (2.4,3.1) {$w' \in \mathcal{W'}$} ;

\node [red] at (-3.3,1.4) {$\gamma$} ;
\node [blue] at (-3.3,-1.4) {$\eta$} ;

\node [gray!50!black] at (2,0) {$C$} ;

\end{tikzpicture}
\end{center}
\caption{An essentially transversal pair $(\gamma, \eta)$ belonging to both $A$ and $A'$. The walls $w$ and $w'$ correspond under the combinatorial isomorphism between $A$ and $A'$ from the proof of Lemma \ref{lem:transv}.}\label{fig:wall correspondence}
\end{figure}

Let $C$ be the convex hull in $A$ of $\gamma \cup \eta$ (see Figure \ref{fig:wall correspondence}). Since apartments are convex sets, $C\subset A'$. Since $\gamma \transv^* \eta$ and $A$ is isometric to $\mathbb{H}^2$, $C$ has non-empty interior. (For other metrics, with large angles around vertices, this may not hold, hence our choice to work with $g_0$.) Therefore $C$ contains a point from the interior of some chamber $c$. Since $A$ and $A'$ are full unions of chambers, $c \subset A \cap A'$. Then by the third building axiom, there is a combinatorial isomorphism from $A$ to $A'$ fixing $A \cap A'$. This combinatorial isomorphism preserves walls, their multiplicities, transversality within apartments, $\gamma(\pm \infty)$ and $\eta(\pm \infty)$, and hence $\gamma, \eta$. Therefore it induces the desired map between $\mathcal{W}$ and $\mathcal{W}'$.
\end{proof}

\begin{lem}\label{lem:wall finite}
For any $(\gamma, \eta) \in \mathscr{D}^*(\tilde X)$, $\mathcal{W}(\gamma, \eta)$ is finite.
\end{lem}

\begin{proof}
Let $A$ be an apartment in which $\gamma$ and $\eta$ are transversal, and recall that this transversality is independent of the choice of the metric on this apartment. Fix $g_0\in \mathcal{M}_\equiv^v(\tilde X)$ for which all chambers are isometric. Consider $\bar\gamma$ and $\bar\eta$ in this metric.

With respect to $g_0$, $\bar \gamma$ and $\bar \eta$ cross at a non-zero angle. Let $\bar c_i$ be the geodesics joining an endpoint of $\gamma$ to an endpoint of $\eta$. Using some hyperbolic geometry, there is a finite $R$ such that for all $i$, $d_{g_0}(\bar \gamma \cap \bar \eta, \bar c_i)<R$. Therefore, any $w\in\mathcal{W}(\gamma, \eta)$ is a wall passing through $B_R(\bar \gamma \cap \bar \eta)$. There are only finitely many of these, since all chambers are isometric. (See Figure \ref{fig:wall finite}.)
\end{proof}

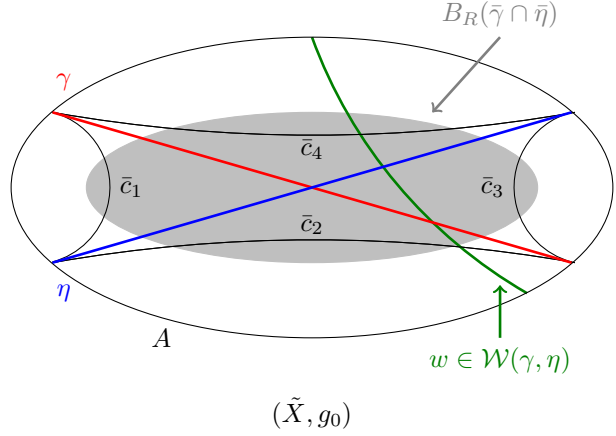
\begin{figure}[ht]
\begin{center}
\begin{tikzpicture}

\draw [gray!50, fill=gray!50] (0,0) ellipse (3 and 1) ;

\draw (-3.45,1) arc (260:280:20) ;
\draw (-3.45,-1) arc (100:80:20) ;
\draw (-3.45,1) arc (75:-75:1.03) ;
\draw (3.45,1) arc (105:255:1.03) ;

\draw [green!50!black, line width=1] (0,2) arc (200:240:6.5);

\draw  (-3.45,-1) arc (100:80:20) ;
\draw  (-3.45,1) arc (260:280:20) ;

\draw  (0,0) ellipse (4 and 2);

\draw [line width=1, red] (-3.45,1) -- (3.45,-1);
\draw [line width=1, blue] (-3.45,-1) -- (3.45,1);

\node at (0,-3) {$(\tilde X, g_0)$} ;
\node  at (-2,-2) {$A$} ;

\draw [line width=1, green!50!black, ->] (2.5,-2) -- (2.5,-1.3) ;
\node [green!50!black] at (2.5,-2.3) {$w \in \mathcal{W}(\gamma,\eta)$} ;

\draw [line width=1, gray, ->] (2.5,2) -- (1.6,1) ;
\node [gray] at (2.5,2.3) {$B_R(\bar \gamma \cap \bar \eta)$} ;

\node [red] at (-3.3,1.4) {$\gamma$} ;
\node [blue] at (-3.3,-1.4) {$\eta$} ;
\node at (-2.4,0) {$\bar c_1$};
\node at (2.4,0) {$\bar c_3$};
\node at (0,-.5) {$\bar c_2$};
\node at (0,.5) {$\bar c_4$};

\end{tikzpicture}
\end{center}
\caption{Construction for the proof of Lemma \ref{lem:wall finite}.}\label{fig:wall finite}
\end{figure}

We now prove some lemmas on the structure of $\mathscr{D}^*(\tilde X)$ and $\varpi$.

\begin{lem}\label{lem:closed}
$\mathscr{D}^*(\tilde X)$ is a closed subset of $(\mathscr{G}(\tilde X) \times \mathscr{G}(\tilde X)) \setminus Diag^*$ where $Diag^*$ is the following `generalized diagonal':
\[ Diag^* := \{ (\gamma, \eta) \in \mathscr{G}(\tilde X) \times \mathscr{G}(\tilde X) :  \gamma(\pm\infty) \cap \eta(\pm\infty) \neq \emptyset\}.\]
\end{lem}

Recall that the topology on $\mathscr{G}(\tilde X)$ is the quotient topology induced by the compact open topology on $G_g(\tilde X)$ for any metric $g$.

\begin{proof}

Suppose that $(\gamma_n, \eta_n) \in \mathscr{G}(\tilde X) \times \mathscr{G}(\tilde X)$ and that $(\gamma_n, \eta_n) \to (\gamma^*, \eta^*)$ with $(\gamma^*, \eta^*)\notin Diag^*$. Let $A_n$ be an apartment in $\tilde X$ in which $\gamma_n$ and $\eta_n$ are transversal. Fix some basepoint $*$ in $\tilde X$. For a fixed $R>0$, there are only finitely many possibilities for $A_n \cap B_R(*)$. Therefore, by a subsequence argument, we can construct a subsequence $(n_i)$ such that $A_{n_i} \cap B_R(*)$ is constant for all $i>R$.  Let $A^* = \bigcup_{R>0} (A_{n_{R+1}} \cap B_R(*))$. Since $A^*$ agrees with an apartment on any ball around $*$, $A^*$ is an apartment. In addition $A^*$ contains the sequence $(\gamma_n \cap A^*, \eta_n\cap A^*)$ which converges to $(\gamma^*, \eta^*)$ on any compact subset of $A^*$. Since $A^*$ is closed, $\gamma^*$ and $\eta^*$ lie in $A^*$. 

The endpoints of $\gamma_n$ and $\eta_n$ approach the endpoints of $\gamma$ and $\eta$. Since the endpoints of $\gamma_n$ and $\eta_n$ are linked, the endpoints of $\gamma^*$ and $\eta^*$ must be linked unless they degenerate so that some endpoint of $\gamma^*$ agrees with some endpoint of $\eta^*$. As $(\gamma^*, \eta^*)\notin Diag^*$, this does not happen. This proves the lemma.

\end{proof}

\begin{cor}
$\mathscr{D}^*(\tilde X)$ is a measurable set.
\end{cor}

\begin{prop}\label{prop:msrbl}
$\varpi$ is a lower semicontinuous function on $\mathscr{D}^*(\tilde X)$. In particular, it is measurable.
\end{prop}

\begin{proof}

We need to show that if $(\gamma_n, \eta_n) \to (\gamma^*, \eta^*)$, then $\liminf_{n\to\infty} \varpi(\gamma_n, \eta_n) \geq \varpi(\gamma^*, \eta^*)$.

Suppose $(\gamma_n, \eta_n) \to (\gamma^*, \eta^*)$. Since $\mathscr{D}^*(\tilde X)$ and $\varpi$ are independent of the choice of metric, we are free to fix a hyperbolic metric $g_0$ on $\tilde X$ and represent elements $c$ of $\mathscr{G}(\tilde X)$ by geodesics $\bar c$ in $G_{g_0}(\tilde X)$.

Suppose that $\bar\gamma^*, \bar\eta^* \subset A^*$ and that $\bar w^*$ is a wall in $A^*$ transversal to $\bar\gamma^*$ and $\bar\eta^*$. Since we are working with a hyperbolic metric, $\bar\gamma^*$ and $\bar\eta^*$ intersect $\bar w^*$ at nonzero angles. For geodesics or geodesic segments in $A^*$, the property of crossing at a non-zero angle is an open condition. Therefore, for all sufficiently large $n$, $\bar\gamma_n\cap A^*$ and $\bar\eta_n \cap A^*$ cross $\bar w^*$ at nonzero angles. If $A_n$ is an apartment containing $\bar\gamma_n$ and $\bar\eta_n$, then there is a wall $\bar w_n'$ in $A_n$ which agrees with $\bar w^*$ on the intersection of $A_n$ with $A^*$, which contains, in particular, the intersection of this wall segment with $\bar \gamma_n$ and $\bar \eta_n$. Then $\bar\gamma_n$ and $\bar\eta_n$ are transversal to $\bar w'_n$ in $A_n$. The fact that these three geodesics are in the common apartment $A_n$ gives us that $\gamma_n \transv^* w'_n$ and $\eta_n \transv^* w'_n$.

Since $\bar w^*$ and $\bar w_n'$ agree on the intersection of $A^* \cap A_n$ (which has nontrivial interior, as in the proof of Lemma \ref{lem:transv}), $q(w^*) = q(w_n')$. Applying this to all $w^* \in \mathcal{W}(\gamma^*,\eta^*)$, we get $\varpi(\gamma_n, \eta_n) \geq \varpi(\gamma^*, \eta^*)$ for sufficiently large $n$. The result follows.

\end{proof}

We cannot upgrade this result to continuity for $\varpi$. The precise manner in which continuity fails is investigated in more detail in Lemma \ref{lem:bdry}. Figure \ref{fig:lower semi-cty}, which illustrates that proof, also provides an illustration of how $\varpi(\gamma^*, \eta^*)$ may be strictly less than $\varpi(\gamma_n, \eta_n)$.

$\mathscr{D}^*(\tilde X)$ and $\varpi$ are also $\Gamma$-invariant:

\begin{lem}\label{lem:Gamma inv}
For any $\alpha \in \Gamma$, $\alpha \cdot \mathscr{D}^*(\tilde X) = \mathscr{D}^*(\tilde X)$ and $\varpi(\gamma, \eta) = \varpi(\alpha \cdot \gamma, \alpha \cdot \eta)$.
\end{lem}

\begin{proof}
This is clear, as $\gamma$ is a simplicial automorphism.
\end{proof}

We are now prepared to define a modified version of the intersection pairing which will reproduce some of the important properties of $i_g(-,-)$, but which will be independent of the metric $g$.

\begin{defn}\label{defn:mod int}
Let $\mu, \nu \in \mathscr{C}(X)$. We define the \emph{combinatorial intersection pairing} of $\mu$ and $\nu$ by
\[ \hat \imath(\mu, \nu) := \int _{\mathscr{D}^*(\tilde X)/\Gamma} \varpi(\gamma, \eta) d\mu d\nu \]
where, by Lemma \ref{lem:Gamma inv}, $\varpi$ descends to a function on $\mathscr{D}^*(\tilde X)/\Gamma$. Equivalently, if $\mathscr{F}$ is a fundamental domain for the $\Gamma$ action on $\mathscr{D}^*(\tilde X)$, 
\[ \hat \imath(\mu, \nu) := \int_\mathscr{F} \varpi(\gamma, \eta)d\mu d\nu. \]
\end{defn}

%

\section{Computing intersection pairings}\label{sec:compute}

We compute the intersection pairings with the most geometric interest, namely those between closed geodesic currents $\langle \alpha \rangle$ and the Lebesgue currents $L_g$. We begin with the pairings by $i_g(-,-)$.

First we note that if $\bar c$ and $\bar d$ are $g$-geodesics in $X$, then the connected components of $\bar c \cap \bar d$ are either points, or nontrivial closed segments of the geodesics. In the latter case, the geodesic segment joins two points on the 0- or 1-skeleton of $X$.

\begin{prop}\label{prop:int count}
Let $\alpha, \beta \in \pi_1(X)=\Gamma$ be prime elements. Let $\bar \alpha$ and $\bar \beta$ be the $g$-geodesics in the free homotopy classes of $\alpha$ and $\beta$. Consider the connected components $p_i$ of $\bar \alpha \cap \bar \beta$. For each $i$, let $\tilde p_i$ be a lift of $p_i$ to $\tilde X$ and $\tilde \alpha_i, \tilde \beta_i$ be the lifts of $\bar \alpha$ and $\bar \beta$ through $\tilde p_i$. Then $i_g(\langle \alpha \rangle, \langle \beta \rangle)$ is the number of $p_i$ such that $\tilde \alpha_i \transv_g \tilde \beta_i$.
\end{prop}

\begin{proof}

Recall, $i_g(\langle \alpha \rangle, \langle \beta \rangle) = (\langle \alpha \rangle \times \langle \beta \rangle)(D_g(\tilde X)/\Gamma)$. Since $\langle \alpha \rangle$ and $\langle \beta \rangle$ are supported solely on lifts of $\bar \alpha$ and $\bar \beta$ (or rather, the elements of $\mathscr{G}(\tilde X)$ which these $g$-geodesics represent), only pairs of lifts of $\bar \alpha$ and $\bar \beta$ have non-zero measure. Since we are measuring pairs mod $\Gamma$ we have one such pair for each intersection $p_i$ of $\bar\alpha$ and $\bar\beta$ in $X$. Since we are measuring only $D_g(\tilde X)/\Gamma$, the only pairs with non-zero measure are those which lift to a pair in $D_g(\tilde X)$, i.e., the pairs $\tilde \alpha_i \transv_g \tilde\beta_i$. Each such pair gives $(\langle \alpha \rangle \times \langle \beta \rangle)$-measure one, proving the result.

\end{proof}

For a metric $g$ and curve $c$, write $l_g(c)$ for the $g$-length of $c$.

\begin{prop}\label{prop:int length}
Let $\alpha \in \pi_1(X) = \Gamma$, and let $\bar \alpha$ be the $g$-geodesic in $X$ in this free homotopy class. Write $\bar\alpha$ as a union of segments $s_i$ such that each segment either has its interior in the interior of a chamber, or is a wall segment joining two vertices. Then 
\[i_g(\langle \alpha \rangle, L_g) = \sum_i q(s_i)l_g(s_i)\]
where $q(s_i)$ is 2 if $s_i$ is in the interior of a chamber, and is the multiplicity of the wall if $s_i$ lies along a wall.
\end{prop}

\begin{proof}
It is sufficient to prove the result for prime closed geodesics.

The support of $\langle \alpha \rangle \times L_g$ in $D_g(\tilde X)$ consists of pairs $(\tilde \alpha,c)$ in $\mathscr{G}(\tilde X)$, represented by $\tilde \alpha, \bar c$ in $G_g(\tilde X)$, where $\tilde \alpha$ is a lift of $\bar \alpha$ and $c \transv_g \tilde \alpha$. From its local description it is clear that $L_g$ assigns zero measure to the set of $c$ for which $\bar c$ is tangent to $\tilde \alpha$, as there is no angular spread to such geodesics. Therefore, we can restrict our attention to those $c$ for which $\bar c$ intersects $\tilde \alpha$ at a positive angle. Further, we can ignore those $c$ for which $\bar c$ intersects $\tilde \alpha$ at a vertex (by Lemma \ref{lem:walls zero}) or at a point where $\tilde \alpha$ crosses a wall $w$ at a positive angle, as the basepoints of such geodesics form a discrete set. Therefore we consider only those pairs where $\alpha$ and $\bar c$ meet at a positive angle in the interior of a chamber, and those pairs where $\bar c$ meets a segment of $\tilde \alpha$ which lies along a wall at a positive angle.  Finally, since we are measuring $D_g(\tilde X)/\Gamma$, we need only consider a single lift $\tilde \alpha$ and those $\bar c$ which intersect it along a fundamental domain $F$ for the action of $\alpha \in \Gamma$ on $\tilde \alpha$, i.e., a segment of length $l_g(\alpha)$.

For a segment $s$ of $F$ in the interior of a chamber,
\begin{align} 
	(\langle \alpha \rangle \times L_g)&(\{(\tilde \alpha, c): \bar c \mbox{ non-singular and meets $s$ at a positive angle}\}) \nonumber \\
		&= L_g(\{(\tilde \alpha, c): \bar c \mbox{ non-singular and meets $s$ at a positive angle}\}) \nonumber 
\end{align}
At any point along $s$, there are only countably many angles measured from $s$ which correspond to singular geodesics since there are countably many vertices in $\tilde X$. Then from the local description of $L_g$ we can compute this measure as
\[ \int_{(p,\theta)\in s \times (-\frac{\pi}{2}, \frac{\pi}{2})} \cos \theta d\theta dp = 2 l_g(s)  \]
since any $c$ for which $\bar c$ meets $s$ at a positive angle satisfies $c \transv_g s$.

Now let $s$ be a segment in $F$ along a wall $w$. Again, every $c$ such that $\bar c$ hits $s$ at a positive angle satisfies $c \transv_g s$ since there is an apartment containing the wall segment $s$ and the two chambers on either side of it that $\bar c$ traverses. Then for each of the $q(w)$ chambers $C_i$ adjoining $s$, all $\bar c$ starting in $C_i$, passing through $s$ and continuing into some $C_j$ with $j\neq i$ are $\transv_g$ to $s$. By the calculation of the first part of the proof, together with the definition of $L_g$, the measure of these pairs for each (unordered) pair $\{i,j\}$ with $i\neq j$ is $2l_g(s)\frac{1}{q(w)-1}$.  There are $\frac{q(w)(q(w)-1)}{2}$ such pairs, giving a contribution of $q(w)l_g(s)$ to $i_g(\langle \alpha \rangle, L_g)$ for the segment $s$. This completes the proof.
\end{proof}

The argument of Proposition \ref{prop:int length} shows a fact we will need below:

\begin{cor}\label{cor:int length}
Let $s$ be any $g$-geodesic segment which does not lie along a wall. Then
\[ L_g(\{c: \bar c \mbox{ meets } s \mbox{ at a positive angle}\}) = 2l_g(s).\]
In particular, if $\bar \alpha$ has no segments along walls, then $i_g(\langle \alpha \rangle, L_g) = 2 l_g(\bar \alpha)$.
\end{cor}

Finally, we compute

\begin{prop}\label{prop:int vol}
For any $g$, $i_g(L_g, L_g) = 4\pi Vol_g(X)$.
\end{prop}

\begin{proof}

First, we note that the set of all pairs of geodesics $(c,d)$ in $\mathscr{G}(\tilde X) \times \mathscr{G}(\tilde X)$ such that $\bar c\cap \bar d$ is a positive length segment has $(L_g\times L_g)$-measure zero by Fubini's theorem, since for any fixed $\bar c$, the set of $\bar d$ tangent to it at some point is easily seen to have $L_g$-measure zero from the local description of $L_g$. Therefore we only need measure those pairs $(c,d)\in D_g(\tilde X)$ where $\bar c$ and $\bar d$ intersect at nonzero angle. By Lemma \ref{lem:walls zero}, the set of geodesics tangent to any wall has $L_g$ measure zero, so we omit these from our considerations as well. Finally, the set of pairs intersecting at a point on wall has $(L_g\times L_g)$-measure zero, as can be seen by fixing $c$ and then using the local description of $L_g$. Therefore, to compute $i_g(L_g,L_g)$ we need only measure those pairs $(c,d)$ whose $g$-representatives $\bar c$ and $\bar d$ intersect at positive angle in the interior of some chamber.

Second, since we are measuring $D_g(\tilde X)/\Gamma$, we can pick one lift of each chamber to $\tilde X$ and measure the set of all $(c,d)$ with $\bar c \transv_g \bar d$ at a point in the interior of such a chamber. Noting that $Vol_g(X) = \sum_C Vol_g(C)$ where the sum runs over all chambers in $X$, it is sufficient to prove 
\[(L_g\times L_g)(\{(c,d): \bar c \transv_g \bar d \mbox{ at a point in } Int(C) \}) = 4\pi Vol_g(C).\]

Let $S^+C$ be the set of inward-pointing unit tangent vectors based at non-vertex points in the boundary of $C$. By Santal\'o's formula (see \cite[\S19.5]{santalo}), 
\[ Vol_g(C) = \frac{1}{2\pi}\int_{v\in S^+C} l_g(\bar c_v) \cos\theta(v) d\theta dp \]
where $\bar c_v$ is the $g$-geodesic segment in $C$ generated by $v$, $\theta(v)$ is the angle between $v$ and the normal vector to the wall it lies on, and $p$ is the basepoint of $v$.  In addition, by Corollary \ref{cor:int length},
\[ l_g(c_v) = \frac{1}{2} \int_{d \in A_v} \cos \phi_v(d) d\phi_v dq \]
where $A_v$ is the set of $g$-geodesic segments $d$ in $C$ intersecting $\bar c_v$ at a positive angle, $\phi_v(d)$ is the angle between the normal to $\bar c_v$ and $d$ and $q$ is the intersection point. From these computations, and using the local description of $L_g$, we see immediately that
\begin{align} 
	 4\pi Vol_g(C) &=  \int_{v\in S^+C} \int_{d \in A_v} \cos \phi_v(d) \cos \theta(v) d\phi_v dq d\theta dp \nonumber \\
			& = (L_g \times L_g)(\{(c,d): \bar c \transv_g \bar d \mbox{ in } Int(C)\}). \nonumber
\end{align}

\end{proof}

We now turn to computing the combinatorial intersection pairing of these same currents.

\begin{prop}\label{prop:hat int count}
Let $\alpha, \beta \in \pi_1(X)=\Gamma$. Let $\hat \alpha$ and $\hat \beta$ be representative curves in the corresponding free homotopy classes which minimize the cardinality of $\hat \alpha \cap \hat \beta$. For each intersection $p_i$, pick a lift $\tilde p_i \in \tilde X$ and lift the curves to $\tilde \alpha_i, \tilde \beta_i$ through $\tilde p_i$. Using the endpoints at infinity of these curves, we can consider $\tilde \alpha_i$ and $\tilde \beta_i$ as elements of $\mathscr{G}(\tilde X)$. Then 
\[\hat \imath(\langle \alpha \rangle, \langle \beta \rangle) = \sum_i \varpi(\tilde \alpha_i, \tilde \beta_i).\]
\end{prop}

\begin{rem}
The ``metric-free'' statement of the Proposition is possible because $\hat \imath(-,-)$ depends solely on the combinatorics of the building,
so is independent of metric.
\end{rem}

\begin{proof}
This result follows from the argument used to prove Proposition \ref{prop:int count}, with $\transv^*$ replacing $\transv_g$, and then incorporating the factor $\varpi(\tilde \alpha_i, \tilde \beta_i)$. Note that an intersection $p_i$ of $\hat \alpha$ and $\hat \beta$ with $\tilde \alpha_i$ and $\tilde \beta_i$ in a common apartment can be removed by a free homotopy if and only if the endpoints of the lifted geodesics at infinity are not linked.
\end{proof}

Our computations involving $L_g$ are aided by the following Lemma. We say two geodesics \emph{agree locally around $p$} if they agree in some neighborhood of $p$.

\begin{lem}\label{lem:mult lemma}
Fix a metric $g$ on $X$ and a geodesic $c \in \mathscr{G}(\tilde X)/\Gamma$ with $G_g(\tilde X)/\Gamma$-representative $\bar c$. Let $\hat A_n = \{ d \in \mathscr{G}(\tilde X)/\Gamma: \varpi(c,d)=n\}$ and $A_n =\{ d \in \mathscr{G}(\tilde X)/\Gamma: \bar d \transv_g \bar c \mbox{ and } \bar d \mbox{ locally agrees with } \bar \gamma \mbox{ for } \gamma \in \hat A_n \mbox{ around some } p\in \bar c \cap \bar\gamma \}$. Then
\[ L_g(A_n) = n L_g(\hat A_n). \]
\end{lem}

\begin{proof}

The local description of $L_g$ shows that $g$-geodesics representing elements of $\hat A_n$ or $A_n$ which share a segment with $\bar c$ have $L_g$-measure zero. We omit them from our calculations, and consider only geodesics which cross $\bar c$ at a non-zero angle. We can also omit any singular geodesics, since they have $L_g$-measure zero.

Write $\hat A_n = \bigsqcup_{\mathcal{W}} \hat A_{n,\mathcal{W}}= \bigsqcup_{\mathcal{W}} \{ d: \mathcal{W}(c,d)=\mathcal{W}\}$ as the disjoint union over all wall sets $\mathcal{W}(c,d)$ which appear for elements of $\hat A_n$. By our finiteness result Lemma \ref{lem:wall finite} and the fact that we are working in $\mathscr{G}(\tilde X)/\Gamma$, this is a finite union. Let $A_{n, \mathcal{W}}$ be the set of all $d$ whose $g$-geodesics representative $\bar d$ agrees locally with some element of $\hat A_{n,\mathcal{W}}$ around its intersection with $\bar c$.

Using $g$-geodesic representatives of our geodesics, it is clear that $A_{n,\mathcal{W}}$ differs from $\hat A_{n, \mathcal{W}}$ precisely by containing geodesics $\bar d'$ which agree with an element $\bar d$ of $\hat A_{n,\mathcal{W}}$ over some initial segment containing its intersection with $\bar c$ and then (perhaps) diverge from $\bar d$ by branching at a wall in $\mathcal{W}$ (see Figure \ref{fig:mult lemma}). (This uses the fact that we are considering only nonsingular geodesics.) At each wall $w\in \mathcal{W}$, the $L_g$ measure of $\hat A_{n,\mathcal{W}}$ relative to that of $A_{n,\mathcal{W}}$ inherits a factor $1/(q(w)-1)$ due to the Markov chain portion of the construction of $L_g$. The product of these factors is $1/\varpi(c,d)=1/n$ for any $d\in \hat A_{n,\mathcal{W}}$. Therefore, 
\[ L_g(A_{n,\mathcal{W}}) = n L_g(\hat A_{n,\mathcal{W}}).\]
Summing this over all $\mathcal{W}$ gives the desired result.

\end{proof}

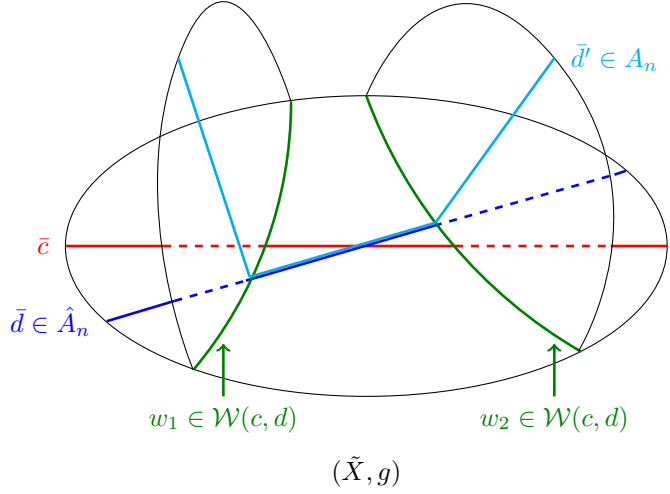
\begin{figure}[ht]
\begin{center}
\begin{tikzpicture}

\draw (0,0) ellipse (4 and 2);

\draw [line width=1, red] (-4,0) -- (-2.7,0);
\draw [line width=1, red] (3.25,0) -- (4,0);
\draw [line width=1, red] (-1.35,0) -- (1.18,0);
\draw [line width=1, red, dashed] (-2.7,0) -- (-1.35,0);
\draw [line width=1, red, dashed] (1.18,0) -- (3.25,0);

\draw [line width=1, blue] (-3.45,-1) -- (-2.55,-.73);
\draw [line width=1, blue, dashed] (-2.55,-.73) -- (-1.6,-.46);
\draw [line width=1, blue] (-1.6,-.46) -- (.9,.27);
\draw [line width=1, blue, dashed] (.9,.27) -- (3.3,.95);
\draw [line width=1, blue] (3.3,.95) -- (3.45,1);

\draw [green!50!black, line width=1] (0,2) arc (200:240:6.5);
\draw [green!50!black, line width=1] (-1,1.92) arc (0:-40:5.55);

\draw plot [smooth, tension=2] coordinates { (0,2) (2.5,2.5) (2.82,-1.4)};
\draw plot [smooth, tension=2] coordinates { (-1,1.92) (-2.5,2.5) (-2.3,-1.65)};

\draw [cyan, line width=1] (-2.5,2.5) -- (-1.55,-.41) ;
\draw [cyan, line width=1] (-1.55,-.41) -- (.92,.31);
\draw [cyan, line width=1] (.92,.31) -- (2.5,2.5);

\node at (0,-3) {$(\tilde X, g)$} ;

\draw [line width=1, green!50!black, ->] (2.5,-2) -- (2.5,-1.3) ;
\node [green!50!black] at (2.5,-2.3) {$w_2 \in \mathcal{W}(c,d)$} ;
\draw [line width=1, green!50!black, ->] (-1.9,-2) -- (-1.9,-1.3) ;
\node [green!50!black] at (-1.9,-2.3) {$w_1 \in \mathcal{W}(c,d)$} ;
\node [cyan] at (3.3,2.5) {$\bar d' \in A_n$} ;

\node [red] at (-4.3,0) {$\bar c$} ;
\node [blue] at (-4.2,-1) {$\bar d \in \hat A_n$} ;

\end{tikzpicture}
\end{center}
\caption{Illustrating Lemma \ref{lem:mult lemma}: A geodesic $\bar d$ in $\hat A_n$ and one geodesic $\bar d' \in A_n$ which agrees locally around $\bar d$ around its intersection with $\bar c$ but which is not in $\hat A_n$. $\bar d$ and $\bar d'$ can only differ by diverging at walls in $\mathcal{W}(c,d)$ such as $w_1$ and $w_2$.}\label{fig:mult lemma}
\end{figure}

With this we can complete our other two computations.

\begin{prop}\label{prop:hat int length}
Fix a metric $g$ and let $\alpha \in \pi_1(X)=\Gamma$ with $g$-geodesic representative $\bar \alpha$. Write $\bar \alpha$ as a union of segments $s_i$ which either have their interior in the interior of a chamber or are a wall segment joining two vertices. Then
\[ \hat \imath(\langle \alpha \rangle, L_g) = \sum_i q(s_i) l_g(s_i) \]
where $q(s_i)=2$ if $s_i$ is in the interior of a chamber and is the multiplicity of the wall if $s_i$ lies along a wall.

That is, 
\[ \hat \imath (\langle \alpha \rangle, L_g) = i_g(\langle \alpha \rangle, L_g).\]
\end{prop}

\begin{proof}
As before, we can restrict our attention to nonsingular geodesics throughout this proof.

Let $\hat A_n(\alpha) = \{d \in \mathscr{G}(\tilde X)/\Gamma: \varpi(\alpha,d)=n\}$. Let $A_n(\alpha) =\{ d \in \mathscr{G}(\tilde X)/\Gamma: \bar d \transv_g \bar \alpha \mbox{ and } \bar d \mbox{ locally agrees with } \bar \gamma \mbox{ for } \gamma \in \hat A_n(\alpha) \mbox{ around some } p\in \bar \alpha \cap \bar\gamma \}$. It is easy to verify that $\{d\in \mathscr{G}(\tilde X)/\Gamma:\bar d \transv_g \bar\alpha\} = \bigcup_{n>0} A_n(\alpha)$. We can prove this union is disjoint as follows. If $\bar d$ locally agrees around $p\in \bar \alpha \cap \bar d$ with $\bar \gamma_1$ for $\gamma_1 \in \hat A_{n_1}(\alpha)$ and with $\bar \gamma_2$ for $\gamma_2\in \hat A_{n_2}(\alpha)$, then $\bar \gamma_1$ and $\bar\gamma_2$ locally agree around $p$. Since $\gamma_2 \transv^* \alpha$, $\bar \gamma_2$ cannot diverge from $\bar\gamma_1$ until after it has crossed every wall in $\mathcal{W}(\alpha, \gamma_1)$, else there would be no common apartment containing $\alpha$ and $\gamma_2$. (We use here that these are nonsingular geodesics, so divergence only happens by branching at a wall, not at a large-angle vertex.) Similarly, $\bar \gamma_1$ must cross every wall in $\mathcal{W}(\alpha, \gamma_2)$. Thus $\mathcal{W}(\alpha, \gamma_1) = \mathcal{W}(\alpha, \gamma_2)$ and so $n_1=\varpi(\alpha, \gamma_1) = \varpi(\alpha, \gamma_2)=n_2$.

Using $g$-geodesic representatives for geodesics in $\mathscr{G}(\tilde X)/\Gamma$ when necessary, we calculate using Lemma \ref{lem:mult lemma} and the fact that the $A_n(\alpha)$ are disjoint:

\begin{align}
	\hat \imath(\langle \alpha \rangle, L_g) & = \int_{\mathscr{D}^*(\tilde X)/\Gamma} \varpi(\eta, \gamma) d\langle \alpha \rangle dL_g \nonumber \\
						& = \sum_{n>0} n  L_g(\hat A_n(\alpha)) \nonumber \\
						& = \sum_{n>0} L_g(A_n(\alpha)) \nonumber \\
						& = L_g(\{d \in \mathscr{G}(\tilde X)/\Gamma : \bar d \transv_g \bar\alpha \}) \nonumber \\
						& = i_g(\langle \alpha \rangle, L_g), \nonumber
\end{align}
using the arguments of Proposition \ref{prop:int length} at the last step. In this computation we have again ignored those $\bar d$ which lie along some segment of $\bar \alpha$ since they have $L_g$-measure zero. 

The result then follows from the expression for $i_g(\langle \alpha \rangle, L_g)$ given in Proposition \ref{prop:int length}.

\end{proof}

\begin{cor}\label{cor:hat int length}
Fix $g$. If $\alpha\in \pi_1(X)=\Gamma$ and if a proportion $\rho$ of $\bar \alpha$ lies along walls, then 
\[2l_g(\bar\alpha) \leq \hat \imath(\langle \alpha \rangle, L_g) \leq (2 + \rho q )l_g(\bar \alpha)\]
where $q$ is the maximum multiplicity of a wall in $\tilde X$. In particular, if $\bar \alpha$ has no segments along walls, then
\[\hat \imath(\langle \alpha \rangle, L_g) = 2 l_g(\bar \alpha).\]
\end{cor}

Finally,

\begin{prop}\label{prop:hat int vol}
For any $g$, $\hat \imath(L_g, L_g) =  4\pi Vol_g(X)$.
\end{prop}

\begin{proof}
The proof follows the proof of Proposition \ref{prop:int vol} essentially verbatim now that we have Corollary \ref{cor:hat int length} to replace Corollary \ref{cor:int length}.
\end{proof}

%

\section{A geometric lemma}\label{sec:geom lemma}

We will need the following geometric lemma to prove the continuity result we want for the combinatorial intersection pairing $\hat \imath(-,-)$. For a geodesic $c\in \mathcal G(\tilde X)$, let $\bar c$ denote the $g$-geodesic representative of $c$. If $c$ is periodic, then we denote by $\hat c$ the periodic $g$-geodesic in $X=\tilde X/\Gamma$ obtained by projecting $\bar \gamma$.

\begin{lem}\label{lem:exp small}
Fix a metric $g$ on $X$ and a periodic geodesic $\gamma \in \mathcal G(\tilde X)$.  Define the sets 
\[\hat W_n = \{ \eta \in \mathscr{G}(\tilde X)/\Gamma : \varpi(\gamma, \eta) >n\}\]
\begin{align}
	W_n = \{\eta \in \mathscr{G}(\tilde X)/\Gamma: \ &\bar \eta \transv_g \bar \gamma  \mbox{ and } \bar \eta \mbox{ locally agrees with } \nonumber \\
				&\bar c \mbox{ for } c \in \hat W_n \mbox{ around some } p\in \bar \gamma \cap \bar c\}. \nonumber
\end{align}
Then there is some constant $\beta>1$, depending only on the metric $g$, such that 
\[ L_g(W_n) \leq \beta^{-n} l_g(\hat \gamma).\]
\end{lem}

\begin{proof}

We have noted previously that the local expression for $L_g$ implies that the set of all geodesics which are tangent to $\bar \gamma$ at some point have $L_g$-measure zero, so we consider only those $\bar \eta$ which intersect $\bar \gamma$ at a nonzero angle. Similarly, those $\bar \eta$ which intersect $\bar \gamma$ at a vertex have $L_g$-measure zero, so we consider only intersections at non-singular points. Therefore the crossing angle between the geodesics is well-defined and for such $\eta$, $\bar \eta \transv_g \bar \gamma$.

Since $(\tilde X, g)$ has a compact quotient, there are only finitely many isometry classes of chambers in $\tilde X$. Therefore, any $g$-geodesic segment $\bar c$ in $\tilde X$ of length $L$ crosses at most $DL$ walls, for some constant $D>0$ which depends only on the metric $g$.

Let $A$ be an apartment containing $\bar \gamma$ and $\bar \eta$. Suppose that $\bar \gamma$ and $\bar \eta$ meet at angle $\theta\leq \frac{\pi}{2}$. As noted in the proof of Lemma \ref{lem:wall finite}, if we let $\bar c_i$ be the $g$-geodesics connecting an endpoint of $\bar \gamma$ to and endpoint of $\bar \eta$, then any wall in $\mathcal{W}(\gamma, \eta)$ must lie to the side of $\bar c_i$ which contains the intersection of $\bar \gamma$ and $\bar \eta$. That is, each such wall must intersect at least one of the geodesic segments $\bar s_i$ which connect the intersection point $\bar \gamma \cap \bar \eta$ to the nearest point on $\bar c_i$.

$A$ equipped with the metric $g$ is a $\textrm{CAT}(-\epsilon^2)$ space for some $\epsilon>0$ depending only on $g$, since $g$ descends to a negatively curved metric on the compact quotient $X$. Using some comparison geometry and some standard calculations in hyperbolic geometry, one can bound the angle by
\[ \theta \leq C e^{- l_g(\bar s_i)\alpha} \  \mbox{ for all } i=1,2,3,4\]
where $l_g(\bar s_i)$ is the $g$-length of $\bar s_i$. $C$ and $\alpha$ are positive constants depending only on $-\epsilon^2$, and therefore only on $g$.

Now suppose that $\varpi(\gamma, \eta)>n$. If $q^*+1$ is the maximum multiplicity of any wall in $\tilde X$, when there must be at least $n/q^*$ walls in $\mathcal{W}(\gamma, \eta)$. Since each wall crosses at least one of the segments $\bar s_i$, there are at most $4DL$ such walls, where $L$ is the maximum length of the four segments $\bar s_i$. Therefore
\[ \frac{n}{q^*} \leq |\mathcal{W}(\gamma, \eta)| \leq 4DL \ \mbox{ and so } \ L \geq \frac{n}{4Dq^*}. \]
Combining this with our bound on $\theta$ in terms of the lengths $l_g(\bar s_i)$, we get
\[ \theta \leq C e^{- L\alpha } \leq C e^{-\frac{n\alpha }{4Dq^*}}.\]
This angle bound holds not just for the pair $(\bar \gamma, \bar \eta)$, but also clearly holds for $(\bar \gamma, \bar c)$ where $\bar c$ locally agrees with $\bar \eta$. That is, it holds for $c \in W_n$.

Now $L_g(W_n)$ can be computed in local coordinates using a small geodesic segment along $\bar \gamma$ to define the local coordinates. The bound on $\theta$ tells us that the total angular spread of all $\eta \in W_n$ intersecting $\bar \gamma$ at a particular point $p$ is exponentially small in $n$. The local expression for the measure 
\[ dL_g = \cos \theta d\theta dp \]
then integrates to something exponentially small in $n$ on performing the $\theta$ integration, and gives a term proportional to $l_g(\hat \gamma)$ when integrating over those $p\in \bar \gamma$ which lie in a fundamental domain for the action of $\Gamma$ on $\tilde X$. This proves the result. 
\end{proof}

%

\section{Continuity at $L_g$}\label{sec:cty}

One of the key properties of the intersection pairing for surfaces is that it is continuous with respect to the weak-* topology on $\mathscr{C}(X)$. We now want to investigate one special case of this continuity which persists for the pairing $\hat \imath(-,-)$.

We want to prove the following:

\begin{prop}\label{prop:cty}
Let $g$ and $g'$ be metrics in $\mathcal{M}_{neg}(\tilde X)$. Let $(\mu_k)$ be a sequence of currents in $\mathscr{C}(X)$ which are of the form $c_k\langle \alpha_k \rangle$ for $\alpha_k \in \pi_1(X)$. Then
\[ \mu_k \overset{weak-*}\longrightarrow L_g \ \implies \ \hat \imath(\mu_k, L_{g'}) \longrightarrow \hat \imath(L_g, L_{g'}).\]
\end{prop}

This asserts a very specific continuity of the pairing at $L_g$.  To prove this, we first need a result on the sets $\varpi^{-1}(n)$.

 \begin{lem}\label{lem:bdry}
 For all $n>0$, and all metrics $g, g'$, 
 \[ (L_g\times L_{g'})(\partial \varpi^{-1}(n)) =0. \]
 \end{lem}

\begin{proof}

Let $\hat B_n = \varpi^{-1}(n,\infty)$; since $\varpi$ is lower semicontinuous (Proposition \ref{prop:msrbl}), these are open subsets of $\mathscr{D}^*(\tilde X)$. Then $\varpi^{-1}(n) = \hat B_{n-1} \setminus \hat B_n$. Therefore, $\partial \varpi^{-1}(n) \subset \partial \hat B_{n-1} \cup \partial \hat B_n$ so it is sufficient to prove that 
\[ (L_g \times L_{g'})(\partial \hat B_n)=0 \ \mbox{ for all }n.\]

As $\hat B_n$ is open, if $(\gamma, \eta) \in \partial \hat B_n$, then 
\begin{itemize}
	\item $\varpi(\gamma, \eta) \leq n$, and
	\item there is a sequence $(\gamma_k, \eta_k)\to (\gamma, \eta)$ in $\mathscr{D}^*(\tilde X)$ with $\varpi(\gamma_k, \eta_k)>n$ for all $k$.
\end{itemize}
Since $\varpi(\gamma_k,\eta_k)>\varpi(\gamma, \eta)$, for each $k$, there exists some  wall $w_k$ satisfying $w_k \transv^* \gamma_k$ and $w_k\transv^*\eta_k$, but either $w_k \ntransv^* \gamma$ or $w_k \ntransv^*\eta$. By passing to a subsequence, we can without loss of generality assume that $w_k\ntransv^*\eta$ for all $k$.

Fix a hyperbolic metric on $\tilde X$, and represent each geodesic $c$ in $\mathscr{G}(\tilde X)$ with its $G_{g_0}(\tilde X)$ representative $\bar c$. Let $p_k = \bar w_k \cap \bar\eta_k$ and $q_k = \bar w_k \cap \bar \gamma_k$.  First, let us consider the case where $p_k$ and $q_k$ remain in some compact subset of $\tilde X$. Then we may pass to a sequence such that $p_k \to p^*\in \bar\eta$ and $q_k\to q^* \in \bar\gamma$. After again passing to a subsequence and using arguments as in Lemma \ref{lem:closed}, $\bar w_k$ must converge to a geodesic $\bar w^*$ which is $\transv_{g_0}$-transversal to $\bar \eta$ at $p^*$ and to $\bar \gamma$ at $q^*$.  Since this geodesic must locally agree with the limit of a sequence of walls near $p^*$, $\bar w^*$ must in fact be a wall.

Within any compact set, the set of wall segments is discrete, so the fact that $w_k \to w^*$ implies that as $k\to \infty$, $\bar w_k$ and $\bar w^*$ agree on larger and larger compact sets containing $p^*$ and $q^*$. Eventually such compact sets become so large that some $w_k$ agrees with $w^*$ past all of $w^*$'s intersections with the walls in $\mathcal{W}(\eta, w^*)$ and $\mathcal{W}(\gamma, w^*)$. Fix some such large $k^*$. Then it is possible to construct an apartment $A^*$ which agrees with $A_{k^*}$ on the convex hull of $\bar \eta, \bar \gamma$ and $\bar w^*$, where $A_{k^*}$ witnesses the $\transv^*$-transversality of these geodesics, and also contains $w_{k^*}$.

This shows that in fact $w_{k^*} \transv^* \eta$ and $w_{k^*} \transv^* \gamma$, a contradiction to our assumptions. We conclude that either $p_k$ or $q_k$ or both must tend to infinity, in the sense that they escape all compact sets. Again after passing to a subsequence we can assume $p_k\to \eta(+\infty)$ and $q_k\to q^*$, or $p_k \to p^*$ and $q_k \to \gamma(+\infty)$, or $p_k\to \eta(+\infty)$ and $q_k \to \gamma(+\infty)$. In any case, we now have the following description of $\partial \hat B_n$:

\

If $(\gamma, \eta)\in \partial\hat B_n$, there is a wall connecting an endpoint of one geodesic to a point on the other geodesic, or to one of its endpoints at infinity. (See Figure \ref{fig:lower semi-cty}.)

\

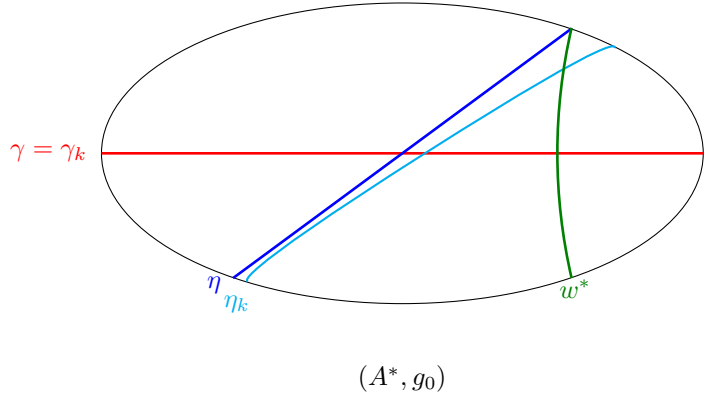
\begin{figure}[ht]
\begin{center}
\begin{tikzpicture}

\draw (0,0) ellipse (4 and 2);

\draw [line width=1, red] (-4,0) -- (4,0);
\draw [line width=1, blue] (-2.25,-1.66) -- (2.25,1.66);

\draw[thick, cyan] plot [smooth, tension=2] coordinates { (-2.05,-1.7) (.3,0) (2.82,1.4)};
\draw [green!50!black, line width=1] (2.25,-1.66) arc (193:167:7.4);

\node[red] at (-4.7,0) {$\gamma = \gamma_k$} ;
\node at (0,-3) {$(A^*, g_0)$} ;
\node[blue] at (-2.5,-1.75) {$\eta$} ;
\node[cyan] at (-2.2,-2) {$\eta_k$} ;
\node[green!50!black] at (2.3,-1.8) {$w^*$} ;

\end{tikzpicture}
\end{center}
\caption{A simple case illustrating a pair $(\gamma, \eta)$ in $\partial \hat B_n$ as described by Lemma \ref{lem:bdry}. This also demonstrates why $\varpi(\gamma, \eta)$ may be strictly smaller than $\lim \varpi(\gamma_k, \eta_k)$.}\label{fig:lower semi-cty}
\end{figure}

We prove that $(L_g\times L_{g'})(\partial \hat B_n)=0$ by fixing one geodesic, say $\eta$, and proving that the $L_g$-measure of the set of geodesics $\gamma$ such that $\gamma \transv^* \eta$ and $\gamma(\infty)=w(\infty)$ for some wall crossing or asymptotic to $\eta$ is zero.

From the definition of $L_g$ it is sufficient to show that in any apartment $A$ containing $\eta$, the set of such $\gamma$ has zero measure with respect to the measure given locally in coordinates by $\cos \theta d\theta dp$. But note that $A$ contains only countably many walls, so at any basepoint $p$, there are only countable many angles which will give a $g'$-geodesic forward asymptotic to such a wall. Thus the $d\theta$-measure of this set of angles is zero, giving the desired result.

\end{proof}

We are now ready to prove Proposition \ref{prop:cty}.

\begin{proof}[Proof of Proposition \ref{prop:cty}]

To simplify notation, we write $\mu_k$ for $c_k\langle \alpha_k\rangle$, recalling that $\mu_k \to L_g$ in the weak-* topology.  We write $\hat \alpha_k$ for the closed $g$-geodesic in $X$ to which this current is associated. 

Let $\varpi_n(\gamma, \eta) = \max\{\varpi(\gamma, \eta), n\}$. We then define
\[ a_{nk} = \int_{\mathscr{D}^*(\tilde X)/\Gamma} \varpi_n d\mu_k dL_{g'},\]
\[ a_{*k} = \int_{\mathscr{D}^*(\tilde X)/\Gamma} \varpi d\mu_k dL_{g'} = \hat \imath(\mu_k, L_{g'}),\]
\[ a_{n*} = \int_{\mathscr{D}^*(\tilde X)/\Gamma} \varpi_n dL_g dL_{g'},\]
\[ a_{**} = \int_{\mathscr{D}^*(\tilde X)/\Gamma} \varpi dL_g dL_{g'}= \hat \imath(L_g, L_{g'}).\]
Note that by our calculations in Section \ref{sec:compute}, all these integrals have finite values. We want to show that $a_{*k} \to a_{**}$ as $k\to \infty$.

Two limit statements involving the $a_{nk}$ are straightforward. First, the functions $\varpi_n$ converge pointwise to $\varpi$ with $0\leq \varpi_n \leq \varpi_{n+1}$, so by the monotone convergence theorem,
\[ a_{nk} \to a_{*k} \ \mbox{ and } \ a_{n*} \to a_{**} \ \mbox{ as } \ n\to \infty, \mbox{ for all }k.\]
Second, each $\varpi_n$ is the sum of finitely many characteristic functions for sets whose boundaries, by Lemma \ref{lem:bdry}, have $L_g$-measure zero. The weak-* convergence of $\mu_k$ to $L_g$ guarantees that the measures of such sets under $\mu_k$ converges to their measure under $L_g$ (see, e.g., \cite[\S1.2]{billingsley}). Then it is easy to see that
\[ a_{nk} \to a_{n*} \ \mbox{ as } \ k \to \infty, \mbox{ for all }n.\]

To prove $a_{*k}\to a_{**}$, we need to give a uniform rate of convergence for $a_{nk}$ in either $n$ or $k$. We do this for $n$ using Lemma \ref{lem:exp small}.

Recall that $\mu_k = c_k \langle \alpha_k \rangle$. We note that as $c_k \langle \alpha_k \rangle$ weak-* converges to $L_g$, $c_k \langle \alpha_k \rangle (\mathscr{G}(\tilde X)/\Gamma) \to L_g(\mathscr{G}(\tilde X)/\Gamma)$. Since this limit is fixed but $\langle \alpha_k \rangle(\mathscr{G}(\tilde X)/\Gamma)$ is proportional to the length of the closed geodesic $\hat \alpha_k$ in $X$ (in any metric, up to adjusting the constant of proportionality) we can conclude that there is some constant $b>0$ such that $c_k \leq \frac{b}{l_{g'}(\hat \alpha_k)}$.

Since $\varpi_n$ and $\varpi$ differ only on $\varpi^{-1}(n,\infty)$,
\[  \left| \int_{\mathscr{D}^*(\tilde X)/\Gamma} \varpi_n d\mu_k dL_{g'} - \int_{\mathscr{D}^*(\tilde X)/\Gamma} \varpi d\mu_k dL_{g'} \right| \leq c_k L_{g'}(W_n)  \]
where $W_n$ is (as in Lemma \ref{lem:exp small}) 
\begin{align} 
	W_n=\{\eta \in \mathscr{G}(\tilde X)/\Gamma \ : \ &\bar \eta \transv_{g'} \bar \alpha_k \mbox{ and } \bar \eta \mbox{ locally agrees with } \bar c \nonumber \\
		& \mbox{ around some } p\in \bar \alpha_k \cap \bar c \mbox{ with } \varpi(\alpha_k, c)>n\}. \nonumber
\end{align}
This relies again on Lemma \ref{lem:mult lemma} to relate the $L_{g'}$-measures of $\hat W_n$ and $W_n$.  Using Lemma \ref{lem:exp small} and our bound on $c_k$ we get that for all $n$ (and, crucially, uniformly in $k$),
\[ \left| \int_{\mathscr{D}^*(\tilde X)/\Gamma} \varpi_n d\mu_k dL_{g'} - \int_{\mathscr{D}^*(\tilde X)/\Gamma} \varpi d\mu_k dL_{g'} \right| \leq \frac{b}{l_{g'}(\hat \alpha_k)} \beta^{-n} l_{g'}(\hat \alpha_k) =  b \beta^{-n} \]
for constants $b>0$ and $\beta>1$ which depend only on $g'$. That is,
\[ | a_{nk} - a_{*k} | < b \beta^{-n} \ \mbox{ for all } k.\]

Finishing the proof is now straightforward. Let $\epsilon>0$ be given. Choose $N$ so that $n>N$ implies $b\beta^{-n}<\epsilon$. Then for all $k$ and all $n>N$, $|a_{nk} - a_{*k}|<\epsilon$. Since $a_{n*}\to a_{**}$, we can choose some $\hat n> N$ such that $|a_{\hat n *} - a_{**}|<\epsilon$. Given this $\hat n$, using the fact that $a_{\hat n k} \to a_{\hat n *}$, pick $K$ so large that $k>K$ implies $| a_{\hat n k} - a_{\hat n *}| < \epsilon.$ Since $\hat n >N$, $|a_{\hat n k} - a_{*k}|<\epsilon$ for all $k$. Combining these inequalities we have that for all $k>K$
\[ |a_{*k}-a_{**}| \leq |a_{*k}-a_{\hat n k}| + |a_{\hat n k}-a_{\hat n *}| + |a_{\hat n *}-a_{**}| < 3\epsilon\]
completing the proof.

\end{proof}

%

\section{Marked length spectrum and volume}\label{sec:MLS-determines-volume}

We can now prove Theorem \ref{thm:MLS-determines-volume}.

\begin{thm}
Let $g_0$ and $g_1$ be metrics in $\mathcal{M}_{neg}(X)$. Suppose we have the following marked length spectrum inequality: for all $\alpha \in \pi_1(X) = \Gamma$, 
\[ l_{g_0}(\alpha) \leq l_{g_1}(\alpha). \]
Then $Vol_{g_0}(X) \leq Vol_{g_1}(X)$.
\end{thm}

\begin{proof}

By the length inequality assumption and Corollary \ref{cor:hat int length}, we have for any $\alpha_k \in \pi_1(X)=\Gamma$, and $c_k>0$,
\begin{equation}\label{eqn:length comp}
\begin{split}
\hat \imath(c_k\langle \alpha_k \rangle, L_{g_0}) &\leq (2+\rho_k q)c_kl_{g_0}(\alpha_k) \\ 
& \leq (2+\rho_k q)c_kl_{g_1}(\alpha_k) \\
& \leq \hat \imath(c_k\langle \alpha_k \rangle, L_{g_1})+\rho_kqc_kl_{g_1}(\alpha_k) 
\end{split}
\end{equation}
where $\rho_k$ is the proportion of time the closed $g_0$-geodesic $\bar\alpha_k$ lies along a wall and $q$ is the maximum multiplicity of any wall in $\tilde X$.

By the density of multiples of closed-geodesic currents in $\mathscr{C}(X)$, we can find a sequence $c_k\langle \alpha_k \rangle \to L_g$. As noted in the proof of Proposition \ref{prop:cty}, $c_k\leq \frac{b}{l_{g_0}(\hat\alpha_k)}$. Since $X$ is compact, the metrics $g_0$ and $g_1$ are Lipschitz equivalent, so we also have $c_k \leq \frac{b'}{l_{g_1}(\hat\alpha_k)}$. As $L_{g_0}$ assigns zero measure to geodesics which are tangent to a wall (Lemma \ref{lem:walls zero}), we must have that $\rho_k\to 0$ as $k\to \infty$. Therefore, with equation \eqref{eqn:length comp} and using Proposition \ref{prop:cty},
\[ \hat \imath(L_{g_0}, L_{g_0}) \leq \hat \imath(L_{g_0}, L_{g_1}). \]
Letting $c_k\langle \alpha_k \rangle \to L_{g_1}$ instead and using the same argument as well as the symmetry of $\hat \imath(-,-)$ gives
\[ \hat \imath(L_{g_0}, L_{g_1}) \leq \hat \imath(L_{g_1}, L_{g_1}). \]
Then we have, using Proposition \ref{prop:hat int vol},
\[ 4\pi Vol_{g_0}(X) = \hat \imath(L_{g_0}, L_{g_0}) \leq \hat \imath(L_{g_0}, L_{g_1}) \leq \hat \imath(L_{g_1},L_{g_1}) = 4\pi Vol_{g_1}(X). \]

\end{proof}

\begin{rem}
The proof of Theorem \ref{thm:MLS-determines-volume} will not work for the metric-dependent intersection pairings $i_{g_0}(-,-)$ and $i_{g_1}(-,-)$. If we attempt the argument above, in equation \eqref{eqn:length comp} we must use $i_{g_0}(-,-)$ on the left-hand side and $i_{g_1}(-,-)$ on the right-hand side. We obtain $i_{g_0}(L_{g_0},L_{g_0}) \leq i_{g_1}(L_{g_0},L_{g_1})$. Our second application of this argument proves $i_{g_0}(L_{g_0},L_{g_1})\leq i_{g_1}(L_{g_1},L_{g_1})$. These inequalities no longer patch together as desired.
\end{rem}

\bibliographystyle{alpha}
\bibliography{biblio}

\end{document}